\date{} \setlength{\textwidth}{15truecm}
\newtheorem{theorem}{Theorem}[section]
\newtheorem{proposition}[theorem]{Proposition}
\newtheorem{remark}[theorem]{Remark}
\newtheorem{corollary}[theorem]{Corollary}
\newtheorem{example}[theorem]{Example}
\numberwithin{equation}{section}
\renewcommand{\mod}{\,\mathrm{mod}\,}
\DeclareMathOperator{\pr}{pr}
\begin{document}

\title[]{On uniqueness of an optimal solution to the Kantorovich problem with density constraints}
\maketitle

\begin{center}
S.N. Popova~\footnote{Moscow Institute of Physics and Technology; National Research University Higher School of Economics.}
\end{center}

\vskip .2in

{\bf Abstract.}
We study optimal transportation problems with constraints on densities of transport plans. 
We obtain a sharp condition for the uniqueness of an optimal solution to the Kantorovich problem with density constraints, 
namely that the Borel measurable cost function $h(x, y)$ satisfies the following non-degeneracy condition:  
$h(x, y)$ can not be expressed as a sum of functions $u(x) + v(y)$ on a set of positive measure.

Keywords: optimal transportation problem, Kantorovich problem, density constraints. 

\section{Introduction}

Over the last decades the Monge-Kantorovich optimal transportation problem has attracted considerable attention of researchers and
new interesting modifications of this problem have been investigated. 
Given two Borel probability measures $\mu$ and $\nu$ on topological spaces $X$ and $Y$ respectively 
and a nonnegative Borel function $h$ on~$X\times Y$, the Kantorovich optimal transportation problem concerns the minimization of  the integral
$$
I_h(\sigma) = \int_{X \times Y} h(x, y) \, \sigma(dx dy)
$$
over all measures $\sigma$ in the set $\Pi(\mu,\nu)$ consisting of Borel probability measures on $X\times Y$
with projections $\mu$ and $\nu$ on the factors, that is, $\sigma (A\times Y)=\mu(A)$ and $\sigma (X\times B)=\nu(B)$
for all Borel sets $A\subset X$ and $B\subset Y$. The measures $\mu$ and $\nu$ are called marginal distributions
or marginals, and $h$ is called a cost function.
If the cost function $h$ is continuous (or at least lower semicontinuous)  
and bounded and the measures $\mu$, $\nu$ are Radon, then the functional $I_h$ attains a minimum on $\Pi(\mu, \nu)$. The measures on which the minimum is attained are called
optimal measures or optimal Kantorovich plans. If $X = Y = \mathbb R^n$, $h(x, y) = g(|x - y|)$, where $g \colon \mathbb R \to \mathbb R$ is a strictly convex function,
and $\mu$ is absolutely continuous with respect to the Lebesgue measure, then the optimal Kantorovich plan is unique and is generated by a Monge mapping (see \cite{GM}). 
General information about Monge and Kantorovich problems can be found in \cite{AG}, \cite{B22}, \cite{BKP}, \cite{RR}  and~\cite{V09}.

In \cite{KM1}--\cite{KM4} a natural modification of the Kantorovich problem was studied (yet in the situation of relatively simple spaces
such as manifolds), which deals with the minimization of the functional $I_h$ 
under the additional constraint that the admissible plans belong to the subset
$\Pi_\Phi(\mu,\nu)$ of  $\Pi(\mu,\nu)$
consisting of measures $\sigma$ possessing densities with respect to a fixed measure $\lambda$
on $X\times Y$ (Lebesgue measure in the first papers) such that these densities are dominated
 by a given function~$\Phi$:
 $$
 \frac{d\sigma}{d\lambda}\le \Phi.
 $$
In \cite{KM1}--\cite{KM2} the results on existence and uniqueness of an optimal solution were obtained for absolutely continuous measures with respect to the Lebesgue measure on $\mathbb R^n$ 
and bounded cost functions $h$ which belong to the class $C^2$ everywhere on $\mathbb R^n \times \mathbb R^n$ except for a closed subset of Lebesgue measure $0$.  
In the works \cite{KM3}--\cite{KM4} the duality theory for the Kantorovich problem with density constraints was studied. 
These investigations have been continued in \cite{D18}, \cite{BDM21}, where general measurable spaces have been considered
and $\lambda$ has been an arbitrary probability measure on $X\times Y$. In this situation, the existence of a minimum has been proved
under the assumption that $\Phi\in L^1(\lambda)$ and $\Pi_\Phi(\mu,\nu)$ is not empty. In particular, no topological conditions
on $\Phi$ are involved. In \cite{BPR} Kantorovich problems with density constraints on transport plans and nonlinear cost functionals have been studied. The papers \cite{BDM21}, \cite{B22}, \cite{BP} addressed Kantorovich problems with density constraints depending on a parameter. 

In this work we study Kantorovich problem with density constraints for general Borel probability measures on Souslin spaces and Borel measurable cost functions (without any regularity assumptions). 
We establish the uniqueness of an optimal solution to the Kantorovich problem with density constraints
for Borel measurable cost functions $h(x, y)$ which satisfy the following non-degeneracy condition: 
$h(x, y)$ can not be expressed as a sum of functions $u(x) + v(y)$ on a set of positive measure.
The non-degeneracy of the function $h$ yields a sharp condition for the uniqueness of an optimal solution: if this condition is not satisfied, 
then we can construct measures $\mu$ and $\nu$ for which the optimal solution to the Kantorovich problem with density constraints is not unique. 
In Section 2 we prove our main theorem about the uniqueness of an optimal solution to the Kantorovich problem with density constraints. 
Section 3 contains auxiliary statements related to the condition of non-degeneracy of a cost function. In Section 3 we also provide a wide class of cost functions satisfying the condition of non-degeneracy.

\section{Uniqueness of an optimal solution to the Kantorovich problem with density constraints}

Let $X, Y$ be topological spaces and let a measure $\eta \in \mathcal P(X \times Y)$ be fixed. For any function $\Phi \in L^1(\eta)$ denote by 
$\Pi_{\Phi}(\mu, \nu; \eta)$ the subset of $\Pi(\mu, \nu)$ obtained by imposing the following density constraints:  
$$
\Pi_{\Phi}(\mu, \nu; \eta) = \Bigl\{ \sigma \in \Pi(\mu, \nu): \frac{d\sigma}{d\eta} \le \Phi \Bigr\}. 
$$

Consider the Kantorovich problem with density constraints:
\begin{equation}\label{prob}
\int_{X \times Y} h(x, y) \sigma(dx dy) \to \inf, \quad \sigma \in \Pi_{\Phi}(\mu, \nu; \eta). 
\end{equation}

Denote
$$
I_h(\sigma) = \int_{X \times Y} h(x, y) \sigma(dx dy).
$$

In \cite{BDM21} the following result on the existence of an optimal solution to the Kantorovich problem with density constraints was established. 

\begin{theorem}[\cite{BDM21}]{\rm (Existence)} \label{th_exist}
Let $(X, \mathcal A, \mu)$ and $(Y, \mathcal B, \nu)$ be probability spaces. Let $\eta \in \mathcal P(X \times Y)$ and $\Phi \in L^1(\eta)$. Suppose that $\Pi_{\Phi}(\mu, \nu; \eta)  \neq \varnothing$. 
Let $h \colon X \times Y \to \mathbb R$ be a bounded from below $\mathcal A \otimes \mathcal B$-measurable function. 
Then the functional $I_h$ attains a minimum on the set $\Pi_{\Phi}(\mu, \nu; \eta)$. 
\end{theorem}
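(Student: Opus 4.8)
The plan is to apply the direct method of the calculus of variations after transporting the problem into $L^1(\eta)$. I would first observe that the constraint $d\sigma/d\eta \le \Phi$ forces every $\sigma \in \Pi_{\Phi}(\mu,\nu;\eta)$ to be absolutely continuous with respect to $\eta$; writing $\rho = d\sigma/d\eta$, the feasible set is therefore in one-to-one correspondence, via $\sigma = \rho\cdot\eta$, with
$$
K := \bigl\{ \rho \in L^1(\eta) : 0 \le \rho \le \Phi \ \ \eta\text{-a.e.}, \ \ \rho\cdot\eta \in \Pi(\mu,\nu) \bigr\},
$$
and $I_h(\sigma) = J(\rho) := \int_{X\times Y} h\,\rho\,d\eta$. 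Since $h$ is bounded from below, adding a suitable nonnegative constant to $h$ — which changes $I_h$ only by that constant, as every admissible plan is a probability measure and hence does not affect minimizers — reduces us to the case $h \ge 0$. Put $m := \inf_{\rho\in K} J(\rho) \in [0,+\infty]$. If $m = +\infty$ the assertion is trivial (every admissible plan is a minimizer), so assume $m < +\infty$ and fix a minimizing sequence $\rho_n \in K$ with $J(\rho_n)\to m$.

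The key point is compactness. The order interval $[0,\Phi] := \{\rho\in L^1(\eta): 0\le\rho\le\Phi \text{ a.e.}\}$ contains $K$, is bounded in $L^1(\eta)$, and is uniformly integrable because it is dominated by the single function $\Phi \in L^1(\eta)$; hence, by the Dunford--Pettis theorem, it is relatively weakly sequentially compact in $L^1(\eta)$. Moreover $[0,\Phi]$ is convex and closed in the $L^1$-norm (passing to an a.e. convergent subsequence preserves the two-sided bound), hence weakly closed. Consequently, after passing to a subsequence (not relabelled), $\rho_n \rightharpoonup \rho_*$ weakly in $L^1(\eta)$ for some $\rho_* \in [0,\Phi]$.

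It remains to pass to the limit. For every $A\in\mathcal A$ the function $(x,y)\mapsto \mathbf{1}_A(x)$ belongs to $L^\infty(\eta)$, so weak convergence gives $\int \mathbf{1}_A(x)\,\rho_*\,d\eta = \lim_n \int \mathbf{1}_A(x)\,\rho_n\,d\eta = \mu(A)$, and likewise for the second marginal; hence $\rho_*\cdot\eta \in \Pi(\mu,\nu)$, so $\sigma_* := \rho_*\cdot\eta \in \Pi_{\Phi}(\mu,\nu;\eta)$. For the functional I would use the truncations $h_N := \min(h,N) \in L^\infty(\eta)$: since $0 \le h_N \le h$ and $\rho_n \ge 0$,
$$
\int h_N\,\rho_*\,d\eta = \lim_n \int h_N\,\rho_n\,d\eta \le \liminf_n \int h\,\rho_n\,d\eta = m ,
$$
and letting $N \to \infty$, the monotone convergence theorem yields $I_h(\sigma_*) = \int h\,\rho_*\,d\eta \le m$. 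Since $\sigma_*$ is admissible, $I_h(\sigma_*) \ge m$, so $I_h(\sigma_*) = m$ and the minimum is attained.

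The substantive steps, as opposed to routine bookkeeping, are two. First, recognizing that the density bound confines the problem to the weakly sequentially compact order interval of $L^1(\eta)$ — this is precisely where the hypothesis $\Phi\in L^1(\eta)$ enters essentially, and it is what replaces any topological or tightness assumption on the spaces. Second, because $h$ need not be bounded above, one must derive weak lower semicontinuity of $I_h$ from the truncation/monotone-convergence argument rather than from plain weak continuity (which would suffice only when $h$ is bounded). I expect setting up the correct weak topology and compact feasible set to be the main obstacle; the verification of the marginal constraints and of the semicontinuity is then standard.
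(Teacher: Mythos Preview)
Your proof is correct and follows exactly the approach indicated in the paper: the paper's proof is the single sentence that the set of densities is weakly compact in $L^1(\eta)$ and $I_h$ is weakly lower semicontinuous, and you have supplied precisely the details behind that sentence (Dunford--Pettis for uniform integrability under the dominating function $\Phi$, Mazur for weak closedness of the convex constraint set, and truncation plus monotone convergence for lower semicontinuity when $h$ is unbounded above).
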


The set of densities of measures from the set $\Pi_{\Phi}(\mu, \nu; \eta)$ with respect to the measure $\eta$ is compact in the weak topology of $L^1(\eta)$, and the cost functional $I_h$ is lower semicontinuous in the weak topology of $L^1(\eta)$, 
therefore, it attains a minimium on $\Pi_{\Phi}(\mu, \nu; \eta)$.

In studying of uniqueness of an optimal solution we employ the following condition of non-degeneracy of the cost function  $h$ with respect to the measure $\eta$. 

Let $X, Y$ be Souslin spaces (i.e., images of Polish spaces under continuous mappings) and let $\eta \in \mathcal P(X \times Y)$. 
We say that a Borel measurable function $h \colon X \times Y \to \mathbb R$ 
\textit{satisfies the non-degeneracy condition with respect to the measure $\eta$} if there do not exist a Borel set $A \subset X \times Y$ with $\eta(A ) > 0$ 
and functions  $u \colon X \to \mathbb R$ and $v \colon Y \to \mathbb R$ such that
$$h(x, y) = u(x) + v(y) \quad \forall (x, y) \in A. $$

For the proof of our main result on uniqueness of an optimal solution to the Kantorovich problem with density constraints we use some auxiliary statements concerning the condition of non-degeneracy of the cost function $h$ with respect to the measure $\eta$. 
The proofs of these statements are presented in Section 3. Moreover, Section 3 contains some examples and sufficient conditions for the non-degeneracy of a cost function.  

We prove the uniqueness of an optimal solution to the Kantorovich problem with density constraints
for Borel measurable cost functions satisfying the non-degeneracy condition. 

\begin{theorem}{\rm (Uniqueness)} \label{th_unique}
Let $X, Y$ be Souslin spaces, let $\mu \in \mathcal P(X)$, $\nu \in \mathcal P(Y)$. 
Let $\eta = \eta_1 \otimes \eta_2$, where $\eta_1 \in \mathcal P(X)$, $\eta_2 \in \mathcal P(Y)$ are non-atomic measures. 
Suppose that $\Phi \in L^1(\eta)$ and $\Pi_{\Phi}(\mu, \nu; \eta) \neq \varnothing$. 
Let $h \colon X \times Y \to \mathbb R$ be a bounded from below Borel measurable function which satisfies the following condition of non-degeneracy: 
there do not exist universally measurable functions $u \colon X \to \mathbb R$ and $v \colon Y \to \mathbb R$ such that
$$\eta(\{(x, y) \in X \times Y: h(x, y) = u(x) + v(y)\}) > 0. $$
Then an optimal solution to the problem (\ref{prob}) is unique and has the form 
$$\sigma~=~I_{W} \Phi \cdot \eta$$ for some Borel set $W \subset X \times Y$. 
\end{theorem}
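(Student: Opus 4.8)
The plan is to argue by contradiction. Suppose $\sigma_0$ and $\sigma_1$ are two distinct optimal solutions to \eqref{prob}. Since $\Pi_\Phi(\mu,\nu;\eta)$ is convex, the midpoint $\sigma_{1/2} = \tfrac12(\sigma_0+\sigma_1)$ is admissible, and since $I_h$ is linear, $\sigma_{1/2}$ is also optimal. Write $f_i = d\sigma_i/d\eta$ for $i=0,\tfrac12,1$; these are densities with $f_i\le\Phi$ $\eta$-a.e., and $f_{1/2}=\tfrac12(f_0+f_1)$. The key structural observation is that optimality should force the densities to saturate the constraint wherever the cost is "small," in a sense dual to the Kantorovich potentials: I expect that any optimal density $f$ takes only the values $0$ and $\Phi$ outside a region controlled by a pair of functions. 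Concretely, I would first establish a \emph{comparison lemma}: if $f$ and $g$ are both densities of optimal solutions, then on the set where $0 < f < \Phi$ one can perturb $f$ by moving mass between two rectangles $A_1\times B_1$ and $A_2\times B_2$ (using non-atomicity of $\eta_1,\eta_2$ to find subsets of prescribed measure, so that the marginals $\mu,\nu$ are preserved) — this is the standard "cyclical rearrangement" available in the density-constrained setting because the constraint $f\le\Phi$ leaves room to push mass down where $f>0$ and room to pile mass up where $f<\Phi$.

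The second step is to extract from such admissible perturbations the conclusion that $h$ is additively separable on a set of positive measure. Performing the four-point swap and comparing the value of $I_h$ before and after shows that optimality is equivalent to an inequality of the form $h(x_1,y_1)+h(x_2,y_2)\le h(x_1,y_2)+h(x_2,y_1)$ for "most" quadruples drawn from the region where the density is strictly between $0$ and $\Phi$; applying this in both directions (swapping the roles) yields equality there. An equality of this $h$-cyclical-monotonicity type, holding on a product-type set of positive measure, is exactly what forces $h(x,y)=u(x)+v(y)$ on a set of positive $\eta$-measure: one fixes reference points $x_0,y_0$ and sets $u(x)=h(x,y_0)$, $v(y)=h(x_0,y)-h(x_0,y_0)$, checking via Fubini and the Souslin/universal-measurability machinery that these are well-defined and universally measurable on sets of positive measure. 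This is where I would invoke the auxiliary statements from Section 3 — presumably one of them says precisely that a relation of the form "$h(x_1,y_1)+h(x_2,y_2)=h(x_1,y_2)+h(x_2,y_1)$ on a positive-measure set of quadruples" implies additive separability of $h$ on a positive-measure set, contradicting non-degeneracy.

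**Therefore**, combining these, the set $\{0<f<\Phi\}$ must be $\eta$-null for every optimal density $f$; hence each optimal $f$ equals $\Phi\cdot I_W$ for some Borel (after completion, universally measurable, then corrected to Borel) set $W$. It remains to see that $W$ is unique up to $\eta$-measure zero: if $f_0=\Phi I_{W_0}$ and $f_1=\Phi I_{W_1}$ are both optimal and $W_0\neq W_1$ modulo $\eta$, then $f_{1/2}=\tfrac12\Phi(I_{W_0}+I_{W_1})$ is an optimal density taking the value $\tfrac12\Phi$ on the positive-measure symmetric difference $W_0\triangle W_1$ (where $\Phi>0$), i.e. $0<f_{1/2}<\Phi$ there, contradicting what we just proved. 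So $W$ is unique and $\sigma=I_W\Phi\cdot\eta$.

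**The main obstacle** I anticipate is the rigorous execution of the first step: producing, for a genuinely measurable (non-smooth) cost and a merely $L^1$ constraint $\Phi$, enough admissible "four-point" perturbations to conclude the pointwise $h$-monotonicity relation on a set of \emph{positive measure}. One has to handle the measurable selection of the rectangles $A_i\times B_i$ of matching marginal mass (leaning on non-atomicity of $\eta_1$ and $\eta_2$ separately, which is why the product structure $\eta=\eta_1\otimes\eta_2$ is assumed), ensure the perturbed density still satisfies $f\le\Phi$ and stays nonnegative, and control a double-integral difference that must be made to have a definite sign. I expect the paper routes most of this delicate measurability bookkeeping through the Section 3 lemmas and the Souslin-space selection theorems; the core transport argument itself is the convexity-plus-linearity midpoint trick together with the rearrangement inequality.
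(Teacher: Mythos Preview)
Your overall architecture---show that every optimal density is $\{0,\Phi\}$-valued (``bang--bang''), then deduce uniqueness from the midpoint---is exactly what the paper does, and your final paragraph is verbatim the paper's uniqueness step. Where you diverge is in the direction and the mechanism of the central perturbation argument.

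You propose to run the contradiction as: optimality $\Rightarrow$ the four-point equality $h(x_1,y_1)+h(x_2,y_2)=h(x_1,y_2)+h(x_2,y_1)$ holds for ``most'' quadruples in $U=\{\varepsilon<\rho<\Phi-\varepsilon\}$ $\Rightarrow$ $h=u+v$ on a set of positive measure. The paper runs the contrapositive: non-degeneracy (via Propositions~\ref{prop_nondegenerate_equiv1} and~\ref{prop_nondegenerate_equiv2}) guarantees that inside \emph{any} positive-measure set one can find a single cycle $(x_i,y_i),(x_{i+1},y_i)$ with $\sum_i h(x_i,y_i)\neq\sum_i h(x_{i+1},y_i)$; from this one cycle the paper builds one explicit perturbation that strictly decreases $I_h$. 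This is more economical: you never have to establish an almost-everywhere pointwise equality from integral inequalities, only to exhibit one cost-decreasing competitor.

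More importantly, the technique you anticipate for producing admissible perturbations (``measurable selection of rectangles $A_i\times B_i$'', ``Souslin-space selection theorems'') is not what the paper uses, and it is not clear it would work: the set $U$ is not a product, so there is no reason a rectangle $A_1\times B_1\subset U$ should have its ``partner'' rectangles $A_1\times B_2,\,A_2\times B_1,\,A_2\times B_2$ also lie in $U$. The paper instead exploits that $(X,\eta_1)$ and $(Y,\eta_2)$ are each isomorphic mod~$0$ to $([0,1],\lambda)$, transfers everything to $[0,1]^2$, and then restricts to points of $V$ (the image of $U$) that are simultaneously Lebesgue density points of $V$ and Lebesgue points of $\hat h$. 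Around such a cycle one takes a common small set $\Delta_r\subset B_r(0)$ of admissible translations, puts $\hat\theta_r=+1$ on $(s_i,t_i)+\Delta_r$ and $-1$ on $(s_{i+1},t_i)+\Delta_r$; using the \emph{same} $\Delta_r$ at every node makes the marginals of $\theta_r$ vanish identically, while the Lebesgue-point property makes the cost change asymptotic to $\varepsilon\,\lambda_2(B_r)\sum_i\bigl(\hat h(s_i,t_i)-\hat h(s_{i+1},t_i)\bigr)<0$. This density/Lebesgue-point localisation is the missing technical ingredient in your sketch, and it replaces entirely the selection machinery you were expecting.
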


\begin{proof}
Let $\sigma = \rho \cdot \eta$ be an optimal solution to the problem (\ref{prob}), where $\rho = \frac{d \sigma}{d \eta}$ is the density of the measure $\sigma$ with respect to $\eta$. 
We show that  
$$\eta(\{(x, y) \in X \times Y: 0 < \rho(x, y) < \Phi(x, y)\}) = 0.$$ 
Suppose the contrary. Then for some  
$\varepsilon > 0$ we have $\eta(\{(x, y): \varepsilon < \rho < \Phi - \varepsilon\}) > 0$. 
Denote
$$U = \{(x, y) \in X \times Y: \varepsilon < \rho(x, y) < \Phi(x, y) - \varepsilon\}.$$

It is known (see \cite{B07}) that every Souslin space with a non-atomic probability measure is isomorphic mod 0 to the space $([0, 1], \lambda)$, where $\lambda$ is the Lebesgue measure on $[0, 1]$. Then there exist sets  
$\tilde X \subset X$, $\tilde Y \subset Y$ with $\eta_1(X \setminus \tilde X) = \eta_2(Y \setminus \tilde Y) = 0$ and
point isomorphisms 
$f \colon \tilde X \to S$, $g \colon \tilde Y \to T$, where $\lambda([0, 1] \setminus S) = \lambda([0, 1] \setminus T) = 0$ and
$$\eta_1|_{\tilde X} \circ f^{-1} = \lambda|_{S}, \quad \eta_2|_{\tilde Y} \circ g^{-1} = \lambda|_{T}. $$

Define 
$$\hat h(s, t) = h(f^{-1}(s), g^{-1}(t)) \quad \forall s \in S, \, t \in T. $$ 
Let $$V = (f \otimes g)(U \cap (\tilde X \times \tilde Y)).$$
Note that $(\lambda \otimes \lambda)(V) = (\eta_1 \otimes \eta_2)(U \cap (\tilde X \times \tilde Y)) = (\eta_1 \otimes \eta_2)(U) > 0$. 
Denote $\lambda_2 = \lambda \otimes \lambda$. 
A.e. $z \in V$ is a density point of $V$, that is, 
$$\lim_{r \to 0} \frac{\lambda_2(V \cap B_r(z))}{\lambda_2(B_r(z))} = 1, $$
where $B_r(z)$ is a ball with the center $z \in \mathbb R^2$ and radius $r > 0$. 
Moreover, a.e. $z \in V$ is a Lebesgue point for the function $\hat h$, that is, 
$$
\lim_{r \to 0} \frac{\int_{B_r(z)} |\hat h(w) -\hat h(z)| \lambda_2(dw)}{\lambda_2(B_r(z))} = 0. 
$$

Let $\tilde V \subset V$ be the set of all points $z \in V$, which are density points for $V$ and are Lebesgue points for the function  $\hat h$.  
Denote $\tilde U = (f \otimes g)^{-1}(\tilde V)$. Then $\eta(\tilde U)  = \lambda_2(\tilde V) > 0$. Since the function $h$ satisfies the condition of non-degeneracy, by Propositions \ref{prop_nondegenerate_equiv1} and \ref{prop_nondegenerate_equiv2}  
there exist points $x_1, \dots, x_n \in \tilde X$, $y_1, \dots, y_n \in \tilde Y$ such that $(x_i, y_i), (x_{i+1}, y_i) \in \tilde U$ for all $i \in \{1, \dots, n\}$ (where $x_{n+1} := x_1$) and
$$
\sum_{i = 1}^{n} h(x_i, y_i) \neq \sum_{i = 1}^{n} h(x_{i+1}, y_i).
$$
Therefore, there exist points $s_1, \dots, s_n \in S$, $t_1, \dots, t_n \in T$, such that
$(s_i, t_i), (s_{i+1}, t_i) \in \tilde V$ for all $i \in \{1, \dots, n\}$ (where $s_{n+1} := s_1$) and
$$
\sum_{i = 1}^{n} \hat h(s_i, t_i) \neq \sum_{i = 1}^{n} \hat h(s_{i+1}, t_i).
$$
Without limitation of generality we may assume that
$$
\sum_{i = 1}^{n} \hat h(s_i, t_i) < \sum_{i = 1}^{n} \hat h(s_{i+1}, t_i).
$$

Let 
$$\Delta_r = \{z \in B_r(0, 0): (s_i, t_i) + z \in V, (s_{i+1}, t_i) + z \in V \mbox{  } \forall i \in \{1, \dots, n\}\}$$
be the set of all points $z \in B_r(0, 0)$ such that the shifts $(s_i, t_i) + z$, $(s_{i + 1}, t_i) + z$ belong to $V$ for all $i \in \{1, \dots, n\}$. 
Note that  
\begin{multline*}
\lambda_2 (\Delta_r) \ge \lambda_2(B_r(0, 0)) - \sum_{i = 1}^n \lambda_2(\{z \in B_r(0, 0): (s_i, t_i) + z \notin V\}) - \\ - \sum_{i = 1}^n \lambda_2(\{z \in B_r(0, 0): (s_{i + 1}, t_i) + z \notin V\}) = \\ = 
 \lambda_2(B_r(0, 0)) - \sum_{i = 1}^n \lambda_2(B_r(s_i, t_i) \setminus V) - \sum_{i = 1}^n \lambda_2(B_r(s_{i + 1}, t_i) \setminus V). 
\end{multline*}
Since $(s_i, t_i)$ and $(s_{i+1}, t_i)$ are density points for the set $V$, 
$$\frac{\lambda_2(B_r(s_i, t_i) \setminus V)}{\lambda_2(B_r(s_i, t_i))} \to 0, \quad r \to 0.$$
Therefore, 
$$
\frac{\lambda_2 (\Delta_r)}{\lambda_2(B_r(0, 0))} \to 1, \quad r \to 0. 
$$
Thus, there exists $\delta > 0$ such that for all $r < \delta$ we have $\lambda_2(\Delta_r) > 0$. Moreover, we may assume that $r$ is sufficiently small so that all the balls $B_r(s_i, t_i)$, $B_r(s_{i + 1}, t_i)$, $i = 1, \dots, n$, are disjoint. 

We show that we can build a small perturbation $\tilde \rho$ of the function $\rho$ on the set $U$ such that
$\tilde \rho \cdot \eta \in \Pi_{\Phi}(\mu, \nu; \eta)$ and $I_h(\tilde \rho \cdot \eta) < I_h(\rho \cdot \eta)$. 
For this purpose we construct a function $\theta_r$ which is used to determine the perturbation of the function $\rho$. 

Define a function $\hat \theta_r \colon S \times T \to \mathbb R$ by the following formula: 
$$\hat \theta_r(s, t) = 1 \quad \mbox{if} \quad (s, t) \in (s_i, t_i) + \Delta_r, $$ 
$$\hat \theta_r(s, t) = -1 \quad \mbox{if} \quad (s, t) \in (s_{i+1}, t_i) + \Delta_r,$$ 
and $\hat \theta_r(s, t) = 0$ otherwise. 

From the definition of the set $\Delta_r$ it follows that $\{(s, t) \in S \times T: \hat \theta_r(s, t) \neq 0\} \subset V$.

Note that
$$
\int_T \hat \theta_r(s, t) \lambda(dt) = \int_S \hat \theta_r(s, t) \lambda(ds) = 0
$$
for all $s \in S$ and $t \in T$. Indeed, for any $t \in T$ we have
\begin{multline*}
\int_S \hat \theta_r(s, t) \lambda(ds) = \\ = \sum_{i = 1}^n \bigl(\lambda(\{s \in S: (s, t) \in (s_i, t_i) + \Delta_r\}) - \lambda(\{s \in S: (s, t) \in (s_{i + 1}, t_i) + \Delta_r\})\bigr) = 0, 
\end{multline*}
since
$$
\lambda(\{s \in S: (s, t) \in (s_i, t_i) + \Delta_r\}) = \lambda \bigl(\Delta_r^{t - t_i} \bigr), 
$$
where $\Delta_r^{t - t_i} = \{s \in S: (s, t - t_i) \in \Delta_r\}$.

Define 
$$\theta_r(x, y) = \hat \theta_r(f(x), g(y))$$ 
and 
$$\rho_r(x, y) = \rho(x, y) + \varepsilon \theta_r(x, y). $$
Show that $\rho_r \cdot \eta \in \Pi_{\Phi}(\mu, \nu; \eta)$. 
We have $|\theta_r| \le 1$ and 
$$\{(x, y) \in X \times Y: \theta_r(x, y) \neq 0\} \subset U.$$
Therefore, $\rho_r = \rho$ on $X \setminus U$ and $\rho - \varepsilon \le \rho_r \le \rho + \varepsilon$ on $U$. Thus, $0 \le \rho_r \le \Phi$. 
Furthermore, the projections of the measure 
$\rho_r \cdot \eta$ on $X$ and $Y$ are equal to $\mu$ and $\nu$ respectively. This follows from the fact that for $\eta_1$-a.e. $x \in X$ we have
$$
\int_Y \theta_r(x, y) \eta_2(dy) = \int_T \hat \theta_r(f(x), t) \lambda(dt) = 0
$$
and for $\eta_2$-a.e. $y \in Y$ we have
$$
\int_X \theta_r(x, y) \eta_1(dx) = \int_S \hat \theta_r(s, g(y)) \lambda(ds) = 0. 
$$
Therefore, $\rho_r \cdot \eta \in \Pi_{\Phi}(\mu, \nu; \eta)$.

Let us show that $I_h(\rho_r \cdot \eta) < I_h(\rho \cdot \eta)$ for sufficiently small $r$. 
We have
\begin{multline*}
I_h(\rho_r \cdot \eta) - I_h(\rho \cdot \eta) = 
\varepsilon \int_{X \times Y} h(x, y) \theta_r(x, y) \eta(dx dy) = \\ = 
\varepsilon \int_{S \times T} \hat h(s, t) \hat \theta_r(s, t) \lambda_2(ds dt) =  \\ = 
\varepsilon \Biggl(
\sum_{i = 1}^n \int_{(s_i, t_i) + \Delta_r} \hat h(s, t) \lambda_2(ds dt) - 
\sum_{i = 1}^n \int_{(s_{i + 1}, t_i) + \Delta_r} \hat h(s, t) \lambda_2(ds dt) \Biggr). 
\end{multline*}
Note that 
\begin{multline*}
\Biggl|\frac{\int_{(s_i, t_i) + \Delta_r} \hat h(s, t) \lambda_2(ds dt)}{\lambda_2(B_r(s_i, t_i))} - 
\frac{\int_{(s_i, t_i) + \Delta_r} \hat h(s_i, t_i) \lambda_2(ds dt)}{\lambda_2(B_r(s_i, t_i))}\Biggr| \le \\
\le \frac{\int_{B_r(s_i, t_i)}|\hat h(s, t) - \hat h(s_i, t_i)| \lambda_2(ds dt)}{\lambda_2(B_r(s_i, t_i))} \to 0, \quad r \to 0, 
\end{multline*}
since $(s_i, t_i) \in \tilde V$ is a Lebesgue point for the function $\hat h$. 
Moreover, 
$$
\frac{\int_{(s_i, t_i) + \Delta_r} \hat h(s_i, t_i) \lambda_2(ds dt)}{\lambda_2(B_r(s_i, t_i))} = 
\hat h(s_i, t_i) \frac{\lambda_2(\Delta_r)}{\lambda_2(B_r(0, 0))} \to \hat h(s_i, t_i), \quad r \to 0.
$$
Therefore, 
\begin{multline*}
\frac{1}{\lambda_2(B_r(0, 0))} \sum_{i = 1}^n \Bigl(\int_{(s_i, t_i) + \Delta_r} \hat h(s, t) \lambda_2(ds dt) - 
\int_{(s_{i + 1}, t_i) + \Delta_r} \hat h(s, t) \lambda_2(ds dt)\Bigr) \to \\ \to
\sum_{i = 1}^n (\hat h(s_i, t_i) - \hat h(s_{i+1}, t_i)), \quad r \to 0.
\end{multline*}

Thus  
$$
I_h(\rho_r \cdot \eta) - I_h(\rho \cdot \eta) = \varepsilon \lambda_2(B_r(0, 0)) 
\Bigl(\sum_{i = 1}^n (\hat h(s_i, t_i) - \hat h(s_{i+1}, t_i)) + o(1) \Bigr) < 0
$$
for sufficiently small $r$. 
We arrive to a contradiction with the optimality of $\rho$. Therefore, $\eta(0 < \rho < \Phi) = 0$ and 
$\rho = I_W \Phi$ for some Borel set $W \subset X \times Y$. 

This implies the uniqueness of an optimal solution. Indeed, suppose that $\sigma_1$ and $\sigma_2$ are optimal solutions to the problem (\ref{prob}). 
Then $(\sigma_1 + \sigma_2)/2 \in \Pi_{\Phi}(\mu, \nu; \eta)$ is also an optimal solution to the problem (\ref{prob}). 
As shown above, $(\sigma_1 + \sigma_2)/2 = I_W \Phi \eta$ for some Borel set $W \subset X \times Y$. Therefore, $\sigma_1 = \sigma_2$. 
\end{proof}

Note that the condition of the non-degeneracy of the cost function also appears to be necessary for the uniqueness of an optimal solution. 
Indeed, suppose that $h(x, y) = u(x) + v(y)$ on some set $A \subset X \times Y$ with $\eta(A) > 0$. 
Then for any measure $\sigma \ll \eta$ with $\sigma(A) = 1$ there exist another measure  
$\pi \ll \eta$ such that $\pi(A) = 1$ and $\pi$ has the same projections on $X$ and $Y$ as $\sigma$
(denote these projections by $\tilde \mu$ and $\tilde \nu$ respectively). Note that
$$
\int_{X \times Y} h d\sigma = \int_{X \times Y} (u(x) + v(y)) d \sigma = 
\int_X u d\tilde \mu + \int_Y v d \tilde \nu = \int_{X \times Y} h d\pi. 
$$
Then for the function $\Phi = (\frac{d \sigma}{d \eta} + \frac{d \pi}{d \eta}) I_A$ we obtain that all measures from the set 
$\Pi_{\Phi}(\tilde \mu, \tilde \nu; \eta)$ are optimal for the functional $I_h$ and $|\Pi_{\Phi}(\tilde \mu, \tilde \nu; \eta)| > 1$, that is, an optimal plan is not unique.

It is a question for future research how to characterize the set $W \subset X \times Y$ such that the unique optimal solution has the form $\sigma = I_W \Phi \eta$. 
It is interesting to study the geometric and topological properties of $W$ and its boundary $\partial W$.

\section{Non-degeneracy of a cost function with respect to a measure}

We state the following necessary and sufficient condition for the non-degeneracy of a Borel measurable cost function $h \colon X \times Y \to \mathbb R$ with respect to a measure $\eta \in \mathcal P(X \times Y)$.

\begin{proposition}\label{prop_nondegenerate_equiv1}
Let $X, Y$ be Souslin spaces, let $\eta \in \mathcal P(X \times Y)$. The following conditions are equivalent 
for a Borel measurable function $h \colon X \times Y \to \mathbb R$: 

1) There exist a Borel set $A \subset X \times Y$ with $\eta(A) > 0$ and functions $u \colon X \to \mathbb R$ and $v \colon Y \to \mathbb R$ such that

$$h(x, y) = u(x) + v(y) \quad \forall (x, y) \in A. $$

2) There exists a Borel set $A \subset X \times Y$ with $\eta(A) > 0$ such that
for any $n \in \mathbb N$ and for any points $x_1, \dots, x_n \in X$, $y_1, \dots, y_n \in Y$ which satisfy the condition that
$(x_i, y_i), (x_{i+1}, y_i) \in A$ for all $i \in \{1, \dots, n\}$ (where $x_{n+1} := x_1$), the following equality holds: 
$$
\sum_{i = 1}^n h(x_i, y_i) = \sum_{i = 1}^n h(x_{i + 1}, y_i).
$$

\end{proposition}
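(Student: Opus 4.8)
The plan is to prove $1)\Rightarrow 2)$ by direct substitution and $2)\Rightarrow 1)$ by building a ``potential'' on an auxiliary graph attached to $A$; no measurability of $u,v$ is involved, so the argument is purely combinatorial. For $1)\Rightarrow 2)$: if $h(x,y)=u(x)+v(y)$ on $A$ and $(x_i,y_i),(x_{i+1},y_i)\in A$ for $i=1,\dots,n$ with $x_{n+1}=x_1$, then $\sum_{i=1}^n h(x_i,y_i)=\sum_{i=1}^n u(x_i)+\sum_{i=1}^n v(y_i)$ while $\sum_{i=1}^n h(x_{i+1},y_i)=\sum_{i=1}^n u(x_{i+1})+\sum_{i=1}^n v(y_i)$, and the two right-hand sides agree because the cyclic reindexing $x_{n+1}=x_1$ yields $\sum_{i=1}^n u(x_{i+1})=\sum_{i=1}^n u(x_i)$.

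For $2)\Rightarrow 1)$ I would fix the set $A$ supplied by 2) and introduce on $\pr_X(A)$ the relation $x\approx x'$ meaning that there is a finite chain $x=z_0,z_1,\dots,z_k=x'$ in $X$ and points $w_1,\dots,w_k$ in $Y$ with $(z_{j-1},w_j),(z_j,w_j)\in A$ for each $j$; this is an equivalence relation (reflexivity via trivial chains, symmetry by reversal, transitivity by concatenation). To each such chain associate the increment $\delta=\sum_{j=1}^k\bigl(h(z_{j-1},w_j)-h(z_j,w_j)\bigr)$. The crucial claim --- the only place 2) is used --- is that $\delta$ depends only on the endpoints $x,x'$: concatenating a chain from $x$ to $x'$ with the reversal of another such chain produces a closed configuration of exactly the form $(x_i,y_i),(x_{i+1},y_i)\in A$, $x_{n+1}=x_1$ appearing in 2), and the asserted equality $\sum h(x_i,y_i)=\sum h(x_{i+1},y_i)$ is precisely the statement that the two increments coincide. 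Denote the common value by $\Delta(x,x')$; it is additive under concatenation, so $\Delta(x,x')+\Delta(x',x'')=\Delta(x,x'')$, $\Delta(x,x)=0$, and $\Delta(x',x)=-\Delta(x,x')$.

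Next I would choose a base point $p_C$ in each $\approx$-class $C$, put $u(x):=\Delta(x,p_C)$ for $x\in C$ (and $u:=0$ off $\pr_X(A)$), and for every $y$ with $A^y:=\{x:(x,y)\in A\}\neq\varnothing$ set $v(y):=h(x,y)-u(x)$ for an arbitrary $x\in A^y$ (with $v$ arbitrary elsewhere). This $v$ is well defined: if $x,x'\in A^y$ then $x\approx x'$ through the one-step chain at $y$, so $u(x)-u(x')=\Delta(x,x')=h(x,y)-h(x',y)$, i.e. $h(x,y)-u(x)=h(x',y)-u(x')$. By construction $h(x,y)=u(x)+v(y)$ for all $(x,y)\in A$, which is assertion 1) with the same set $A$. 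I expect the main obstacle to be the well-definedness of $\Delta$: one must carefully reconcile an arbitrary closed chain in the auxiliary graph with the exact index pattern of condition 2) and check that the telescoping of the $h$-differences around it is exactly the claimed equality; the remaining steps (additivity of $\Delta$, consistency of $v$, the trivial direction) are routine bookkeeping.
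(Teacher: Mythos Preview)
Your proposal is correct and follows essentially the same route as the paper: introduce an equivalence relation on $\pr_X(A)$ via chains through $A$, use condition~2) to make the chain increment $\Delta$ path-independent, choose a representative in each class, and set $u(x)=\Delta(x,\text{representative})$. Your direct definition $v(y)=h(x,y)-u(x)$ for any $x$ with $(x,y)\in A$ is a mild streamlining---the paper additionally introduces a symmetric relation $\sim_Y$ on $Y$ and an increment $\Delta_v$ before defining $v$---but the core argument is identical.
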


\begin{proof}
Obviously, 1) $\Rightarrow$ 2). Let us prove that 2) $\Rightarrow$ 1). Let $A \subset X \times Y$ be a Borel set of positive 
measure for which the condition 2) is satisfied. Let us construct functions $u \colon \pr_X A \to \mathbb R$ and $v \colon \pr_Y A \to \mathbb R$
such that $h(x, y) = u(x) + v(y)$ on $A$, where $\pr_X A$ and $\pr_Y A$ are the projections of the set $A$ on $X$ and $Y$ respectively, i.e., 
$\pr_X A = \{x \in X: \exists y \in Y, \, (x, y) \in A\}$. Without limitation of generality we may assume that $\pr_X A = X$ and $\pr_Y A = Y$.

Consider the following equivalence relation on $X$:
$x \sim_X x'$ if there exist points $x_1 = x, x_2, \dots, x_{n+1} = x' \in X$ and $y_1, \dots, y_n \in Y$ such that 
$(x_i, y_i), (x_{i+1}, y_i) \in A$ for any $i \in \{1, \dots, n\}$.  
For any $x, x' \in X$ with $x \sim_X x'$, we set
\begin{equation} \label{Delta_u}
\Delta_u(x, x') = \sum_{i = 1}^{n} (h(x_i, y_i) - h(x_{i+1}, y_i)), 
\end{equation}
where $x_1 = x$, $x_{n + 1} = x'$ and $x_2, \dots, x_n \in X$, $y_1, \dots, y_n \in Y$ are such that 
$(x_i, y_i), (x_{i + 1}, y_i) \in A$ for all $i \in \{1, \dots, n\}$. 
Note that the condition 2) implies that the function $\Delta_u$ is well-defined and does not depend on the choice of points $x_2, \dots, x_n$, $y_1, \dots, y_n$. 
Furthermore, the function $\Delta_u$ satisfies the following property: if $x \sim_X x'$ and $x' \sim_X x''$, then 
$\Delta_u(x, x'') = \Delta_u(x, x') + \Delta_u(x', x'')$. 

We similarly define the equivalence relation $\sim_Y$ on the set $Y$: 
$y \sim_Y y'$ if there exist points $y_1 = y, y_2, \dots, y_{n+1} = y' \in Y$ and $x_1, \dots, x_n \in X$ such that \linebreak
$(x_i, y_i), (x_i, y_{i + 1}) \in A$ for any $i \in \{1, \dots, n\}$.  
For any $y, y' \in Y$ with $y \sim_Y y'$, we set
\begin{equation}\label{Delta_v}
\Delta_v(y, y') = \sum_{i = 1}^{n} (h(x_i, y_i) - h(x_i, y_{i+1})), 
\end{equation}
where $y_1 = y, y_2, \dots, y_{n+1} = y' \in Y$ and $x_1, \dots, x_n \in X$ are such that \linebreak
$(x_i, y_i), (x_i, y_{i + 1}) \in A$ for all $i \in \{1, \dots, n\}$. 
Then the condition 2) implies that the function $\Delta_v$ is well-defined.

Prove that for any points $(x, y), (x', y') \in A$ such that $x \sim_X x'$, $y \sim_Y y'$, we have
\begin{equation}\label{Delta_h}
h(x, y) - h(x', y') = \Delta_u(x, x') + \Delta_v(y, y'). 
\end{equation}
Indeed, there exist points $x_1 = x, x_2, \dots, x_{n+1} = x' \in X$ and $y_1, \dots, y_n \in Y$ such that 
$(x_i, y_i), (x_{i+1}, y_i) \in A$ for any $i \in \{1, \dots, n\}$ and $\Delta_u(x, x')$ is defined by the formula (\ref{Delta_u}).   
Then from (\ref{Delta_v}) it follows that 
$$
\Delta_v(y, y') = h(x, y) - \sum_{i = 1}^{n} (h(x_i, y_i) - h(x_{i+1}, y_i)) - h(x', y'). 
$$
Therefore, $\Delta_u(x, x') + \Delta_v(y, y') = h(x, y) - h(x', y')$. 

Let $X_0$ be a set of representatives of equivalence classes for the relation $\sim_X$. 
For every $x \in X$ denote by $r_X(x)$ the representative of the equivalence class containing the point $x$. 

Let $$\tilde Y = \{y \in Y: \exists x \in X_0, \, (x, y) \in A\}.$$
Note that for any $y \in Y$ there exists at most one $x \in X_0$ such that $(x, y) \in A$ (since for any $x_1, x_2 \in X$ with $(x_1, y), (x_2, y) \in A$ we have $x_1 \sim_X x_2$). 
For every $y \in \tilde Y$ denote by $x_{*}(y)$ the unique element $x \in X_0$ such that $(x, y) \in A$. 
Moreover, for any $y \in Y$ there exists $\tilde y \in \tilde Y$ such that $y \sim_Y \tilde y$. Indeed, for any $y \in Y$ there exists $x \in X$ such that $(x, y) \in A$ and afterwards we can pick
$\tilde y \in Y$ such that $(r_X(x), \tilde y) \in A$. Then we obtain that $\tilde y \in \tilde Y$ and $y \sim_Y \tilde y$. 

Let $Y_0 \subset \tilde Y$ be a set of representatives of equivalence classes for the relation~$\sim_Y$. 
For every $y \in Y$ denote by $r_Y(y)$ the representative of the equivalence class containing the point $y$.

Define
$$
u(x) = \Delta_u(x, r_X(x)), \quad x \in X, 
$$
and 
$$
v(y) =  \Delta_v(y, r_Y(y)) + h(x_{*}(r_Y(y)), r_Y(y)), \quad y \in Y. 
$$ 

Show that $u(x) + v(y) = h(x, y)$ for all points $(x, y) \in A$. 
Indeed, let $(x, y) \in A$. Then $x_{*}(r_Y(y)) = r_X(x)$, since $x \sim_X x_{*}(r_Y(y))$, $x \sim_X r_X(x)$ 
and $x_{*}(r_Y(y)), r_X(x) \in X_0$.  
By (\ref{Delta_h}) we have
$$
h(x, y) - h(x_{*}(r_Y(y)), r_Y(y)) = \Delta_u(x, x_{*}(r_Y(y))) + \Delta_v(y, r_Y(y)),
$$
hence $h(x, y) = u(x) + v(y)$. 

\end{proof}

In the case where $\eta$ is a product measure $\eta = \eta_1 \otimes \eta_2$, $\eta_1 \in \mathcal P(X)$, $\eta_2 \in \mathcal P(Y)$, we can show that in the statement of the criterion for the non-degeneracy of a cost function 
it is sufficient to test only quadruples of points 
$(x_1, y_1), (x_1, y_2), (x_2, y_1), (x_2, y_2) \in A$, where $x_1, x_2 \in X$, $y_1, y_2 \in Y$.  
Specifically, the following proposition holds.
 
\begin{proposition}\label{prop_nondegenerate_equiv2}
Let $X, Y$ be Souslin spaces, let $\eta_1 \in \mathcal P(X)$, $\eta_2 \in \mathcal P(Y)$
and $\eta = \eta_1 \otimes \eta_2$. The following conditions are equivalent 
for a Borel measurable function $h \colon X \times Y \to \mathbb R$: 

1) There exist universally measurable functions
$u \colon X \to \mathbb R$ and $v \colon Y \to \mathbb R$ such that 
$$\eta(\{(x, y) \in X \times Y: h(x, y) = u(x) + v(y)\}) > 0. $$ 

2) There exists a Borel set $A \subset X \times Y$ with $\eta(A) > 0$ such that
for any points $x_1, x_2 \in X$, $y_1, y_2 \in Y$ which satisfy the condition that
$$(x_1, y_1), (x_1, y_2), (x_2, y_1), (x_2, y_2) \in A$$ 
the following equality holds: 
$$h(x_1, y_1) + h(x_2, y_2) = h(x_1, y_2) + h(x_2, y_1).$$ 
\end{proposition}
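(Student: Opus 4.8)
The plan is to prove both implications directly, the second one being the substantive part.

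For 1) $\Rightarrow$ 2): given universally measurable $u\colon X\to\mathbb R$, $v\colon Y\to\mathbb R$ with $\eta(E)>0$, where $E=\{(x,y):h(x,y)=u(x)+v(y)\}$, I would first replace $u,v$ by Borel functions $\tilde u,\tilde v$ agreeing with them outside an $\eta_1$-null set and an $\eta_2$-null set respectively (a universally measurable function is $\eta_i$-measurable, hence equal to a Borel function $\eta_i$-a.e.). Then $A:=\{(x,y):h(x,y)=\tilde u(x)+\tilde v(y)\}$ is Borel and coincides with $E$ up to an $\eta$-null set, so $\eta(A)>0$; and for any $x_1,x_2\in X$, $y_1,y_2\in Y$ with $(x_1,y_1),(x_1,y_2),(x_2,y_1),(x_2,y_2)\in A$ both sides of the required identity equal $\tilde u(x_1)+\tilde u(x_2)+\tilde v(y_1)+\tilde v(y_2)$.

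For 2) $\Rightarrow$ 1): the idea is that condition 2) lets one reconstruct $h$ on a positive-measure subset of $A$ from one vertical slice $\{x_0\}\times A_{x_0}$ and one horizontal slice $A^{y_0}\times\{y_0\}$, and the product structure $\eta=\eta_1\otimes\eta_2$ together with Fubini's theorem is exactly what produces slices large enough for this. Writing $A_x=\{y:(x,y)\in A\}$ and $A^y=\{x:(x,y)\in A\}$ (Borel sections), I would first choose $x_0$ with $\eta\bigl(A\cap(X\times A_{x_0})\bigr)>0$: by Fubini $\int_X\eta\bigl(A\cap(X\times A_{x_0})\bigr)\,\eta_1(dx_0)=\int_Y\eta_1(A^y)^2\,\eta_2(dy)>0$, the positivity coming from $\eta(A)=\int_Y\eta_1(A^y)\,\eta_2(dy)>0$, so a positive-$\eta_1$-measure set of $x_0$ works, and any such $x_0$ has $\eta_2(A_{x_0})>0$. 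Put $Q=A_{x_0}$, $B=A\cap(X\times Q)$, so $\eta(B)>0$. Running the same averaging argument for $B$ — namely $\int_Q\eta\bigl(B\cap(A^{y_0}\times Y)\bigr)\,\eta_2(dy_0)=\int_X\eta_2(B_x)^2\,\eta_1(dx)>0$ — I would then pick $y_0\in Q$ with $W:=B\cap(A^{y_0}\times Y)$ of positive $\eta$-measure. Every $(x,y)\in W$ satisfies $(x,y),(x,y_0),(x_0,y),(x_0,y_0)\in A$, so applying 2) to the quadruple $\{x_0,x\}\times\{y_0,y\}$ gives $h(x,y)=h(x,y_0)+h(x_0,y)-h(x_0,y_0)$; hence $u(x):=I_{A^{y_0}}(x)\,h(x,y_0)$ and $v(y):=I_{Q}(y)\,\bigl(h(x_0,y)-h(x_0,y_0)\bigr)$ are Borel and satisfy $h=u+v$ on $W$, so $\eta(\{(x,y):h(x,y)=u(x)+v(y)\})\ge\eta(W)>0$, which is 1). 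Note the product structure even yields Borel splitting functions, so Proposition \ref{prop_nondegenerate_equiv1} is not needed here.

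The routine verifications would be: $\eta_i$-measurability of the section-mass maps $x_0\mapsto\eta_2(A_{x_0})$, $x\mapsto\eta_2(B_x)$, and the like; the applicability of Fubini's theorem for $\eta_1\otimes\eta_2$ on a product of Souslin spaces; and the ``Borel modulo a null set'' fact used in the first implication — all standard. I expect the only real point, and the only place where the hypothesis $\eta=\eta_1\otimes\eta_2$ is genuinely used, to be the two-stage choice of a good vertical slice $x_0$ and then a good horizontal slice $y_0$ via the averaging identities displayed above.
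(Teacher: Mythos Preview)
Your argument is correct and takes a genuinely different route from the paper's. For $2)\Rightarrow 1)$ the paper first finds a ``density rectangle'' $X_0\times Y_0$ with $\eta(A\cap(X_0\times Y_0))\ge(1-\varepsilon^2)\eta(X_0\times Y_0)>0$, then passes to the set $X_1$ of $x$ whose vertical slice has $\eta_2$-measure at least $(1-\varepsilon)\eta_2(Y_0)$; a pigeonhole argument shows any three such slices have common intersection, which lets one fix a single $x_0$ and define $u(x)=h(x,y)-h(x_0,y)$ for \emph{some} $y$ in the overlap, and similarly $v$. Because the ``some $y$'' makes $\{u<c\}$ a projection of a Borel set, the paper only obtains $u,v$ universally measurable. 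Your two-stage second-moment averaging $\int\eta_1(A^y)^2\,\eta_2(dy)>0$ and $\int\eta_2(B_x)^2\,\eta_1(dx)>0$ locates a specific pair $(x_0,y_0)$ in advance, so your $u(x)=I_{A^{y_0}}(x)\,h(x,y_0)$ and $v(y)=I_Q(y)\bigl(h(x_0,y)-h(x_0,y_0)\bigr)$ are sections of Borel functions and hence Borel --- a strictly sharper conclusion obtained with less machinery (no density-rectangle lemma, no pigeonhole). The trade-off is that the paper's construction defines $u$ on a larger set $X_1$ with a built-in consistency check, whereas yours works on the smaller but perfectly adequate set $W$; for the purposes of the proposition your approach is cleaner.
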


\begin{proof}
Obviously, 1) $\Rightarrow$ 2). Let us prove that 2) $\Rightarrow$ 1). Let $A \subset X \times Y$ be a Borel set with
$\eta(A) > 0$ such that $h(x_1, y_1) + h(x_2, y_2) = h(x_1, y_2) + h(x_2, y_1)$ for any points 
$(x_1, y_1), (x_1, y_2), (x_2, y_1), (x_2, y_2) \in A$. 

Fix $\varepsilon > 0$. 
Then there exist Borel sets $X_0 \subset X$ and $Y_0 \subset Y$ 
such that $$\eta(A \cap (X_0 \times Y_0)) \ge (1 - \varepsilon^2) \eta(X_0 \times Y_0) > 0.$$ 
Let 
$$
B = A \cap (X_0 \times Y_0).
$$ 
Denote $B_x = \{y \in Y_0: (x, y) \in B\}$ for any $x \in X$. Then by Fubini's theorem
$$
\eta(B) = \int_{X_0} \eta_2(B_x) \eta_1(dx).
$$

Define
$$
X_1 = \{x \in X_0: \eta_2(B_x) \ge (1 - \varepsilon) \eta_2(Y_0)\}.
$$
We show that $\eta_1(X_1) \ge (1 - \varepsilon) \eta_1(X_0).$
Indeed, we have
\begin{multline*}
\eta((X_0 \times Y_0) \setminus B) = \int_{X_0} \eta_2(Y_0 \setminus B_x) \eta_1(dx) \ge \\ \ge
\int_{X_0 \setminus X_1} \eta_2(Y_0 \setminus B_x) \eta_1(dx) \ge \varepsilon \eta_2(Y_0) \eta_1(X_0 \setminus X_1). 
\end{multline*}
On the other hand, 
$$
\eta((X_0 \times Y_0) \setminus B) \le \varepsilon^2 \eta(X_0 \times Y_0) = \varepsilon^2 \eta_1(X_0) \eta_2(Y_0).
$$
Therefore, $\eta_1(X_0 \setminus X_1) \le \varepsilon \eta_1(X_0)$. 

Let 
$$C = B \cap (X_1 \times Y_0). 
$$
Then $\eta(C) > 0$ and 
$$\eta_2(C_x) \ge (1 - \varepsilon) \eta_2(Y_0) \quad \forall x \in X_1, $$ 
where $C_x = \{y \in Y_0: (x, y) \in C\}$. 
This implies that for any $x_1, x_2, x_3 \in X_1$ we have 
\begin{equation} \label{intersect}
\eta_2(C_{x_1} \cap C_{x_2} \cap C_{x_3}) > 0. 
\end{equation}
Indeed, otherwise we get
$$
\eta_2(C_{x_1}) + \eta_2(C_{x_2}) + \eta_2(C_{x_3}) = \int_{Y_0} (I_{C_{x_1}} + I_{C_{x_2}} + I_{C_{x_3}}) d \eta_2 
\le 2 \eta_2(Y_0), 
$$
which leads to a contradiction for small $\varepsilon > 0$, since
$$\eta_2(C_{x_1}) + \eta_2(C_{x_2}) + \eta_2(C_{x_3})  \ge 3 (1 - \varepsilon) \eta_2(Y_0).$$

Fix $x_0 \in X_1$. For every $x \in X_1$ we define
$$u(x) = h(x, y) - h(x_0, y)$$ for any $y \in Y_0$ such that $(x, y) \in C$ and $(x_0, y) \in C$. 
Such $y$ exists because $C_x \cap C_{x_0} \neq \varnothing$. Moreover, this definition is independent of the choice of $y$
due to the condition 2). 

For every $y \in Y_0$ we define 
$$
v(y) = h(x, y) - u(x)
$$
for any $x \in X_1$ such that $(x, y) \in C$. Show that this definition is correct, i.e., if $x_1, x_2 \in X_1$ are such that
$(x_1, y), (x_2, y) \in C$, then 
$$h(x_1, y) - u(x_1) = h(x_2, y) - u(x_2). $$
By virtue of (\ref{intersect}) we can take $y_0 \in C_{x_0} \cap C_{x_1} \cap  C_{x_2}$. 
Then $u(x_1) = h(x_1, y_0) - h(x_0, y_0)$ and $u(x_2) = h(x_2, y_0) - h(x_0, y_0)$. 
Therefore, 
$$
(h(x_1, y) - u(x_1)) - (h(x_2, y) - u(x_2)) = h(x_1, y) - h(x_1, y_0) - h(x_2, y) + h(x_2, y_0) = 0, 
$$
since $(x_1, y), (x_1, y_0), (x_2, y), (x_2, y_0) \in C$. 

Show that the functions $u, v$ are universally measurable. Indeed, for any $c \in \mathbb R$ the set
$\{x \in X_1: u(x) < c\}$ equals the projection of the Borel set 
$$\{(x, y) \in C: h(x, y) - h(x_0, y) < c\}$$ on $X_1$ 
and hence is Souslin and, therefore, universally measurable. Similarly, the sets $\{y \in Y_0: v(y) < c\}$ are universally measurable for all $c \in \mathbb R$. 

Moreover, by construction $h(x, y) = u(x) + v(y)$ for all $(x, y) \in C$. 
Thus, we obtain universally measurable functions $u$ and $v$, satisfying the condition 1). 
\end{proof}

Note that in the general case the non-degeneracy of a cost function $h$ 
with respect to a measure $\eta \in \mathcal P(X \times Y)$ 
is not equivalent to the fulfillment of the condition 2) from Proposition \ref{prop_nondegenerate_equiv1} for all $n \le N$, 
where $N \in \mathbb N$ is a fixed number. Consider the following example.
 
\begin{example}
{\rm
Let $X = Y = [0, 1]$ and let $N \in \mathbb N$ be fixed. We give an example of a Borel measurable function $h \colon X \times Y \to \mathbb R$ and a measure $\eta \in \mathcal P(X \times Y)$ such that
$h$ can not be expressed as a sum of functions $u(x) + v(y)$ on a set of positive $\eta$-measure, but the condition 2) from Proposition \ref{prop_nondegenerate_equiv1} is fulfilled for all $n < N$. 

Define a measure $\eta \in \mathcal P([0, 1]^2)$ by induction. 
First, consider the partition 
$$[0, 1)^2 = \bigsqcup_{i, j = 0}^{N - 1} [i/N, (i+1)/N) \times [j/N, (j + 1)/N).$$ 

Set
$$
\eta([i/N, (i+1)/N) \times [j/N, (j + 1)/N)) = \begin{cases} 
1/(2N) &\text{if $i = j$ or  $i = (j + 1) \mod N$,}\\ 
0 & \text{otherwise.}
\end{cases}
$$

Then we make a similar partition into $N^2$ small subsquares for each of the squares $[i/N, (i+1)/N) \times [j/N, (j+1)/N)$, 
where $i = j$ or  $i = (j + 1) \mod N$, 
and assign equal positive measures $1/(2N)^2$ to all of $2N$ subsquares as shown in the figure. Proceeding by induction we define the measure $\eta$. 

\begin{center}
 \includegraphics[width=0.4\textwidth]{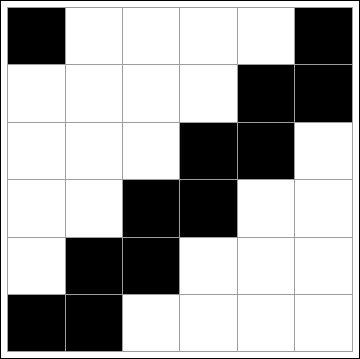}
\end{center}

Equivalently, the measure $\eta$ can be constructed in the following way. 
Let $\tau$ be a measure on $\{0, \dots, N - 1\}^2$ which is uniformly distributed on the set
$$
S = \{(0, 0), (1, 1), \dots, (N - 1, N - 1)\} \cup \{(1, 0), (2, 1), \dots, (N - 1, N - 2), (0, N - 1)\},
$$
i.e. 
$$
\tau(i, j) = 1/(2N) \quad \forall (i, j) \in S, \qquad \tau(i, j) = 0 \quad \forall (i, j) \notin S. 
$$
Let $(x^{(k)}, y^{(k)})$ be indepent random vectors with distribution $\tau$. 
Let $\eta$ be the distribution of the random vector $(x, y)$, where 
$$
x = \sum_{k  = 1}^{\infty} \frac{x^{(k)}}{N^k}, \quad y = \sum_{k  = 1}^{\infty} \frac{y^{(k)}}{N^k}. 
$$

Let 
$$
h(x, y) = \sum_{k = 1}^{\infty} I(x^{(k)} = 0, y^{(k)} = 0) 2^{-k}, 
$$
where $I(x^{(k)} = 0, y^{(k)} = 0)$ is the indicator function of the set $\{x^{(k)} = 0, y^{(k)} = 0\}$. 

Let us show that there does not exist a set $A \subset X \times Y$ with $\eta(A) > 0$ such that $h(x, y) = u(x) + v(y)$ on $A$. 
Suppose that such a set $A$ exists. 
First, we prove that $$\eta(A) \le \frac{2N - 1}{2N}. $$  
To see this, note that for any $(x, y) \in [0, 1/N)^2$ at least one of $2N$ points \linebreak
$(x + i/N, y + j/N)$, where $(i, j) \in S$, does not belong to $A$. Indeed, in the opposite case we have $(x + i/N, y + j/N) \in A$ for all $(i, j) \in S$ and
$$
\sum_{i = 0}^{N - 1} \Bigl(h(x +i/N, y + i/N) - h(x + ((i + 1) \mod N)/N, y + i/N)\Bigr) = 1, 
$$
which gives a contradiction. 

Denote
$$
A_{i, j} = A \cap [i/N, (i + 1)/N) \times [j/N, (j + 1)/N), \quad i, j \in \{0, 1, \dots, N - 1\}. 
$$ 
Then
\begin{multline*}
\eta(A) = \sum_{i, j} \eta(A_{i, j}) = \int_{[0, 1/N)^2} \sum_{(i, j) \in S} I((x + i/N, y + j/N) \in A) \eta(dx dy) \le \\
\le (2N - 1) \eta([0, 1/N)^2) = \frac{2N - 1}{2N}. 
\end{multline*}
Similarly, we obtain that for any square $Q$ with coordinates from the set $\{k/N^l: k, l \in \mathbb Z_{+}\}$ 
we have $$\eta(A \cap Q) \le \frac{2N - 1}{2N} \eta(Q).$$
This leads to a contradiction with the fact that $\eta(A) > 0$, since for any $\varepsilon > 0$ there exists a square $Q$ with coordinates from the set $\{k/N^l: k, l \in \mathbb Z_{+}\}$ 
such that 
$$\eta(A \cap Q) > (1 - \varepsilon) \eta(Q) > 0.$$
Therefore, $h$ can not be expressed as a sum of functions $u(x) + v(y)$ on a set of positive $\eta$-measure. 

We now prove that the function $h$ satisfies the condition 2) from Proposition \ref{prop_nondegenerate_equiv1}  for all $n < N$. Let 
$$A = \{(x, y) \in X \times Y: (x^{(k)}, y^{(k)}) \in S \quad \forall k \in \mathbb N\}. $$ 
Then $\eta(A) = 1$. 
Let $(x_1, y_1), \dots, (x_n, y_n)$, $(x_2, y_1), \dots, (x_n, y_{n-1}), (x_1, y_n) \in A$, where $n < N$. 
Then 
$$(x^{(k)}_i - y^{(k)}_i) \mod N \in \{0, 1\}, \quad (x^{(k)}_{i + 1} - y^{(k)}_i) \mod N \in \{0, 1\}$$ 
for all $k \in \mathbb N$ and $i \in \{1, \dots, n\}$, where 
$x_{n+1} := x_1$. 
Show that for any $k \in \mathbb N$ we have
$$
\sum_{i = 1}^n h_k(x_i, y_i) = \sum_{i = 1}^n h_k(x_{i + 1}, y_i), 
$$
where 
$$h_k(x, y) =  I(x^{(k)} = 0, y^{(k)} = 0). $$
Without limitation of generality we may assume that $x^{(k)}_1, \dots, x^{(k)}_n$ are distinct and $y^{(k)}_1, \dots, y^{(k)}_n$ are distinct. Then 
$$
(x^{(k)}_{i + 1} - x^{(k)}_i) \mod N \in \{-1, 1\} \quad \forall i \in \{1, \dots, n\}.
$$ 
Therefore, 
$$
(x^{(k)}_{i} - x^{(k)}_1) \mod N \in \{-i+1, i-1\} \quad \forall i \in \{1, \dots, n\}.
$$ 
If $n > 2$, we arrive at a contradiction, since $(x^{(k)}_n - x^{(k)}_1) \mod N \in \{-1, 1\}$ and $n < N$. 
If $n = 2$, then $(x^{(k)}_i - y^{(k)}_j) \mod N \in \{0, 1\}$ for any $i, j \in \{1, 2\}$, which also gives a contradiction. 

Therefore, the function $h$ satisfies the condition 2) for all $n < N$. 
}
\end{example}

Let us formulate a sufficient condition for the non-degeneracy of a cost function on $\mathbb R^2$. Denote by $\lambda_2$ the Lebesgue measure on $\mathbb R^2$. 

\begin{proposition}\label{prop_nondegenerate_r2}
Let $h \colon \mathbb R^2 \to \mathbb R$ be a Borel measurable function. Suppose that at least one of the following two conditions holds:
\begin{itemize}
\item[1)] For $\lambda_2$-a.e. $(x_0, y_0) \in \mathbb R^2$ 
there exists a neighbourhood $U$ of the point $(x_0, y_0)$ such that 
for all $(x, y) \in U$ there exists the mixed second partial derivative $\frac{\partial^2 h}{\partial x \partial y}(x, y) \neq 0$. 
\item[2)] There exists the Sobolev derivative $\frac{\partial^2 h}{\partial x \partial y} \in L^1_{loc}(\mathbb R^2)$ 
and $\frac{\partial^2 h}{\partial x \partial y} \neq 0$ $\lambda_2$-a.e.  
\end{itemize} 
Then for any set $A \subset \mathbb R^2$ with Lebesgue measure $\lambda_2(A) > 0$ 
there exist such points $x_1, x_2, y_1, y_2 \in \mathbb R$ that 
$$(x_1, y_1), (x_1, y_2), (x_2, y_1), (x_2, y_2) \in A$$ 
and
$$h(x_1, y_1) + h(x_2, y_2) < h(x_1, y_2) + h(x_2, y_1).$$ 
\end{proposition}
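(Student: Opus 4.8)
Write $\Delta h(x_1,x_2;y_1,y_2):=h(x_2,y_2)-h(x_1,y_2)-h(x_2,y_1)+h(x_1,y_1)$, so that the sought inequality is $\Delta h<0$. Interchanging $y_1$ and $y_2$ replaces $\Delta h$ by $-\Delta h$ and permutes the same four corners $(x_i,y_j)$, so it suffices to produce corners in $A$ with $\Delta h\neq 0$; equivalently, arguing by contradiction, the plan is to assume $\lambda_2(A)>0$ and $\Delta h=0$ for every quadruple $(x_1,y_1),(x_1,y_2),(x_2,y_1),(x_2,y_2)$ lying in $A$ (this ``all equalities'' situation is exactly item~2) of Propositions~\ref{prop_nondegenerate_equiv1}--\ref{prop_nondegenerate_equiv2}, so one could instead replace $A$ at once by a positive-measure set on which $h(x,y)=u(x)+v(y)$, though this is not needed), and to contradict either hypothesis. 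The common geometric input, as in the proof of Proposition~\ref{prop_nondegenerate_equiv2}, is a \emph{double density} step: from a density point $(x_0,y_0)$ of $A$, zoom into $Q_r=(x_0-r,x_0+r)\times(y_0-r,y_0+r)$ with $\lambda_2(A\cap Q_r)\ge(1-\varepsilon)\lambda_2(Q_r)$; by Fubini and Chebyshev's inequality the $x$-fiber of $A\cap Q_r$ fills all but a $C\sqrt\varepsilon$-fraction of $(y_0-r,y_0+r)$ for every $x$ outside a set of length $\le C\sqrt\varepsilon\, r$ (here $C$ is an absolute constant), whence one may choose $x_1<x_2$ with $x_2-x_1\ge(1-C\sqrt\varepsilon)2r$ for which the common fiber $F:=\{y:(x_1,y),(x_2,y)\in A\cap Q_r\}$ has length $\ge(1-C\sqrt\varepsilon)2r$.

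\emph{Hypothesis 2).} Since the mixed distributional derivative $m:=\partial^2 h/\partial x\,\partial y$ lies in $L^1_{loc}(\mathbb R^2)$, the function $(x,y)\mapsto h(x,y)-\int_0^x\!\int_0^y m(s,t)\,dt\,ds$ has vanishing mixed distributional derivative, hence equals $p(x)+q(y)$ a.e. for some $p,q\in L^1_{loc}(\mathbb R)$ (a distribution annihilated by $\partial^2/\partial x\,\partial y$ is of this form, and membership in $L^1_{loc}$ passes to $p,q$). Fixing Borel representatives of $p,q$, the function $\tilde h(x,y):=p(x)+q(y)+\int_0^x\!\int_0^y m(s,t)\,dt\,ds$ is everywhere defined, equals $h$ $\lambda_2$-a.e., and satisfies $\Delta\tilde h(x_1,x_2;y_1,y_2)=\int_{x_1}^{x_2}\!\int_{y_1}^{y_2} m(s,t)\,dt\,ds$ for \emph{all} $x_1<x_2$, $y_1<y_2$. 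Replace $A$ by $A\cap\{h=\tilde h\}$ (same measure). Choose $(x_0,y_0)\in A$ that is a density point of $A$, a Lebesgue point of $m$, and satisfies $m(x_0,y_0)\neq 0$; this is possible since the three exceptional sets are $\lambda_2$-null. In $Q_r$ the construction above also furnishes $y_1<y_2$ in $F$ with $y_2-y_1\ge(1-C\sqrt\varepsilon)2r$, so that the rectangle $R_r=[x_1,x_2]\times[y_1,y_2]$ has all four corners in $A$ and $\lambda_2(R_r)\ge(1-C\sqrt\varepsilon)^2\lambda_2(Q_r)$. Then $\int_{R_r}m=\Delta\tilde h=\Delta h=0$ by the standing assumption, while $\bigl|\int_{R_r}m-m(x_0,y_0)\lambda_2(R_r)\bigr|\le\int_{Q_r}|m-m(x_0,y_0)|=o(\lambda_2(Q_r))$ as $r\to 0$; fixing $\varepsilon$ small and then $r$ small gives $\bigl|\int_{R_r}m\bigr|\ge\tfrac12|m(x_0,y_0)|\,\lambda_2(Q_r)>0$, a contradiction.

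\emph{Hypothesis 1).} As $\mathbb R^2$ is second countable, up to a $\lambda_2$-null set it is covered by countably many open rectangles $U_k$ on each of which $m=\partial^2 h/\partial x\,\partial y$ exists at every point and is nowhere zero; the two orders of differentiation give the same argument with $x$ and $y$ interchanged, so assume on $U_k$ that $g:=\partial_y h$ exists and $m=\partial_x g$. Some $A\cap U_k$ has positive measure; choosing $r$ small so that $Q_r\subset U_k$ and running the double density step inside $A\cap U_k$, we get $x_1<x_2$ in the $x$-section of $U_k$, a common fiber $F$ of positive length, and a density point $y_0\in F$ of $F$ --- hence also of each fiber $\{y:(x_i,y)\in A\cap U_k\}$. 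From $\Delta h\equiv 0$ on quadruples in $A$, $h(x_1,y)-h(x_1,y_0)=h(x_2,y)-h(x_2,y_0)$ for all $y\in F$; dividing by $y-y_0$ and letting $y\to y_0$ through $F$, and using that each side converges to the genuine partial derivative at $(x_i,y_0)$ (which exists since $(x_i,y_0)\in U_k$), we obtain $g(x_1,y_0)=g(x_2,y_0)$. But $x\mapsto g(x,y_0)$ is differentiable on the interval $\{x:(x,y_0)\in U_k\}$ with derivative $m(\cdot,y_0)$, which, being a derivative, has the Darboux (intermediate value) property and, being nowhere zero, has constant sign; hence $g(\cdot,y_0)$ is strictly monotone on that interval and $g(x_1,y_0)\neq g(x_2,y_0)$, a contradiction.

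The crux is converting the analytic non-vanishing of $\partial^2 h/\partial x\,\partial y$ into the combinatorial rectangle statement. Under Hypothesis 1) this turns out to be clean: once one notices that the Darboux property forces $x\mapsto\partial_y h(x,y_0)$ to be strictly monotone, the only care needed is to keep all evaluations inside a single $U_k$ (so $\partial_y h$ genuinely exists) and to use that a difference-quotient limit along a set of density $1$ is the ordinary derivative. The genuinely analytic part is Hypothesis 2): establishing the exact rectangle identity $\Delta\tilde h=\iint m$ for a good representative on all rectangles (the structure/ACL theory of functions with $L^1_{loc}$ mixed weak derivative), together with inscribing a combinatorial rectangle with corners in $A$ whose area is a fixed proportion of $\lambda_2(Q_r)$ (Fubini and Chebyshev, exactly as in the proof of Proposition~\ref{prop_nondegenerate_equiv2}). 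I expect the rectangle identity to be the main obstacle to a fully self-contained write-up.
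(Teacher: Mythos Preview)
Your argument is correct, but it follows a noticeably different path from the paper's.

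The paper works directly (not by contradiction) and finds the four corners as a single \emph{square translate}: at a density point $(x_0,y_0)$ of $A$ it sets $A_\delta=\{(x,y)\in B_\delta(x_0,y_0):(x,y),(x+\delta,y),(x,y+\delta),(x+\delta,y+\delta)\in A\}$ and shows $\lambda_2(A_\delta)/\lambda_2(B_\delta)\to 1$. For Hypothesis~1 it then applies the second--order mean value theorem
\[
h(x+\delta,y+\delta)-h(x,y+\delta)-h(x+\delta,y)+h(x,y)=\tfrac{\partial^2 h}{\partial x\,\partial y}(\xi_1,\xi_2)\,\delta^2,
\]
which is nonzero since $(\xi_1,\xi_2)$ stays in the neighbourhood where $m\neq 0$. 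For Hypothesis~2 it simply writes the second difference as $\iint_{[x,x+\delta]\times[y,y+\delta]} m$ and uses the Lebesgue point property.

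Your route replaces the $(\delta,\delta)$--shift by a Fubini/Chebyshev construction of long common fibers, and --- more interestingly --- handles Hypothesis~1 without the second--order MVT, instead passing to the limit along the fiber $F$ to get $\partial_y h(x_1,y_0)=\partial_y h(x_2,y_0)$ and invoking the Darboux property of $x\mapsto m(x,y_0)$ to force strict monotonicity of $\partial_y h(\cdot,y_0)$. This is a nice alternative; it trades one classical one--variable fact (the iterated MVT for mixed differences) for another (Darboux's theorem). For Hypothesis~2 your argument is essentially the same as the paper's, but you are more explicit about choosing a representative $\tilde h=p(x)+q(y)+\iint m$ and restricting to $A\cap\{h=\tilde h\}$ so that the rectangle identity $\Delta h=\iint_R m$ holds at the chosen corners; the paper asserts this identity without comment, so your version is in fact slightly more careful. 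The paper's proof is shorter overall, while yours isolates more clearly where the analytic input (rectangle identity, Darboux) is used.
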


\begin{corollary}\label{cor1}
The statement of Proposition \ref{prop_nondegenerate_r2} holds true in the case where 
$h \colon \mathbb R^2 \to \mathbb R$ is an analytic function, 
which cannot be represented as a sum of functions $u(x) + v(y)$. 
\end{corollary}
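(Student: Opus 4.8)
The plan is to deduce the corollary from condition 1) of Proposition \ref{prop_nondegenerate_r2}; thus it suffices to show that an analytic function $h \colon \mathbb R^2 \to \mathbb R$ which is not of the form $u(x) + v(y)$ automatically satisfies that hypothesis.

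First I would observe that for an analytic $h$ the identity $\frac{\partial^2 h}{\partial x \partial y} \equiv 0$ on $\mathbb R^2$ is equivalent to $h(x,y) = u(x) + v(y)$ for some $u, v$. The implication from the product form is immediate. Conversely, if the mixed partial vanishes identically, then on the connected set $\mathbb R^2$ the function $\frac{\partial h}{\partial y}$ has zero derivative in $x$, hence depends only on $y$, say $\frac{\partial h}{\partial y}(x,y) = \psi(y)$; integrating in $y$ gives $h(x,y) = h(x,0) + \int_0^y \psi(s)\,ds$, which is of the desired form with $u(x) = h(x,0)$ and $v(y) = \int_0^y \psi(s)\,ds$. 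Consequently, since $h$ is assumed not to be a sum $u(x)+v(y)$, the analytic function $g := \frac{\partial^2 h}{\partial x \partial y}$ is not identically zero on $\mathbb R^2$.

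Next I would invoke the classical fact that the zero set of a real-analytic function on a connected open subset of $\mathbb R^n$ which is not identically zero has Lebesgue measure zero; in particular the closed set $Z = \{(x,y) \in \mathbb R^2 : g(x,y) = 0\}$ satisfies $\lambda_2(Z) = 0$. Then for every point $(x_0, y_0)$ in the full-measure open set $\mathbb R^2 \setminus Z$, the neighbourhood $U = \mathbb R^2 \setminus Z$ of $(x_0,y_0)$ has the property that at each of its points the mixed second partial derivative $\frac{\partial^2 h}{\partial x \partial y}$ exists (by analyticity of $h$) and is nonzero (by the definition of $Z$). Hence condition 1) of Proposition \ref{prop_nondegenerate_r2} is met, and the conclusion of that proposition — the existence, for every $A$ with $\lambda_2(A) > 0$, of points $x_1, x_2, y_1, y_2$ with $(x_1,y_1),(x_1,y_2),(x_2,y_1),(x_2,y_2) \in A$ and $h(x_1,y_1) + h(x_2,y_2) < h(x_1,y_2) + h(x_2,y_1)$ — gives the corollary.

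The only ingredient that is not entirely elementary is the measure-zero property of the zero set of a nontrivial real-analytic function; this is standard (it follows by induction on the dimension using Fubini's theorem together with the fact that a nonzero analytic function of one real variable has only isolated zeros), so I would merely cite it. All remaining steps are routine, so I do not anticipate a genuine obstacle here.
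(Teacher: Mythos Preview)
Your proposal is correct and follows essentially the same approach as the paper: observe that the mixed partial $\frac{\partial^2 h}{\partial x \partial y}$ is analytic, hence either identically zero (forcing $h(x,y)=u(x)+v(y)$) or has zero set of Lebesgue measure zero, so that condition~1) of Proposition~\ref{prop_nondegenerate_r2} applies. Your write-up merely fills in more detail (the integration argument and the explicit neighbourhood $\mathbb R^2\setminus Z$) than the paper's terse version.
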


\begin{proof}[Proof of Corollary \ref{cor1}]
Indeed, the function $\frac{\partial^2 h}{\partial x \partial y}$ is analytic on $\mathbb R^2$, and its set of zeros 
 has Lebesque measure 0, if $\frac{\partial^2 h}{\partial x \partial y}$ is not identically $0$. 
In the latter case the function $h$ can be represented as a sum of functions $u(x) + v(y)$. 
\end{proof}

\begin{remark}
{\rm
For any closed set $Z \subset \mathbb R^n$ there exists a function $h \in C^{\infty}(\mathbb R^n)$ such that
$h > 0$ a.e. on $\mathbb R^n \setminus Z$ and all partial derivatives of all orders of the function $h$ (including the function $h$) vanish on $Z$. Indeed, 
the open set $U = \mathbb R^n \setminus Z$ may be written as $U = \bigsqcup_{k = 1}^{\infty} U_k \sqcup N$, where 
$\{U_k\}$ is a countable set of disjoint open balls and $N$ is a set of Lebesgue measure $0$. For any open ball $U_k$ take a function $h_k \in C^{\infty}(\mathbb R^n)$ such that 
$h_k > 0$ on $U_k$, $h_k = 0$ outside $U_k$ and, moreover, the absolute values of the function $h_k$ and all its partial derivatives of order at most $k$ are estimated from above by $1/k$. 
Set $h = \sum_{k = 1}^{\infty} h_k$. This series converges uniformly on $\mathbb R^n$, since 
$$
\sup_{x \in \mathbb R^n} \Bigl|\sum_{k = m}^{\infty} h_k(x)\Bigr| \le \sup_{k \ge m} \|h_k\|_{\infty} \le 1/m \to 0, \quad m \to \infty. 
$$
Similarly, we can prove that the series of partial derivatives of any order of functions $h_k$ converges uniformly on 
$\mathbb R^n$. 
Therefore, the series for the function $h$ can be differentianted termwise and $h \in C^{\infty}(\mathbb R^n)$. 

Moreover, $h > 0$ on the set $\bigsqcup_{k = 1}^{\infty} U_k$ and all partial derivatives of all orders of the function 
$h$ vanish on $\mathbb R^n \setminus (\bigsqcup_{k = 1}^{\infty} U_k)$.
}
\end{remark}

\begin{proof}[Proof of Proposition \ref{prop_nondegenerate_r2}]
Let $A \subset \mathbb R^2$ and $\lambda_2(A) > 0$. 

In case 1) for a.e. $(x_0, y_0) \in A$ the following conditions hold:
$(x_0, y_0)$ is a density point of the set $A$ and there exists a neighbourhood of the point $(x_0, y_0)$ wherein there exists the second partial derivative $\frac{\partial^2 h}{\partial x \partial y} \neq 0$. 

In case 2) for a.e. $(x_0, y_0) \in A$ the following conditions hold:
$(x_0, y_0)$ is a density point of the set $A$, $(x_0, y_0)$ is a Lebesgue point for the function $\frac{\partial^2 h}{\partial x \partial y}$ 
and $\frac{\partial^2 h}{\partial x \partial y}(x_0, y_0)~\neq~0$. 

In each of the cases 1) and 2) fix a point $(x_0, y_0) \in A$ satisfying the listed conditions. 

For any $\delta > 0$ define
$$
A_{\delta} = \{(x, y) \in B_{\delta}(x_0, y_0): (x, y), (x + \delta, y), (x, y + \delta), (x + \delta, y + \delta) \in A \}, 
$$
that is, $A_{\delta}$ is the set of all points $(x, y)$ from the ball $B_{\delta}(x_0, y_0)$ such that the point $(x, y)$ and its shifts $(x + \delta, y)$, $(x, y + \delta)$, $(x + \delta, y + \delta)$ belong to the set $A$. 

Let us estimate $\lambda_2(A_{\delta})$. Note that
\begin{multline*}
\lambda_2(\{(x, y) \in B_{\delta}(x_0, y_0): (x + \delta, y + \delta) \notin A\}) = 
\lambda_2(B_{\delta}(x_0 + \delta, y_0 + \delta) \setminus A)  \le \\
\le \lambda_2(B_{3 \delta}(x_0, y_0) \setminus A). 
\end{multline*}
A similar estimate also holds for shifts $(x + \delta, y)$ and $(x, y + \delta)$.
Hence we obtain the bound
$$
\lambda_2(A_{\delta}) \ge \lambda_2(B_{\delta}(x_0, y_0)) - 4 \lambda_2(B_{3\delta}(x_0, y_0) \setminus A).  
$$
Since $(x_0, y_0)$ is a density point of the set $A$, 
$$
\frac{\lambda_2(B_{3\delta}(x_0, y_0) \setminus A)}{\lambda_2(B_{3\delta}(x_0, y_0))} \to 0, \quad \delta \to 0.
$$
Therefore, 
$$\frac{\lambda_2(A_{\delta})}{\lambda_2(B_{\delta}(x_0, y_0))} \to 1, \quad \delta \to 0. $$

Thus, for all sufficiently small $\delta > 0$ there exists a point $(x, y) \in B_{\delta}(x_0, y_0)$ 
such that $(x, y), (x + \delta, y), (x, y + \delta), (x + \delta, y + \delta) \in A$. 

In case 1) we have
$$
h(x + \delta, y + \delta) - h(x, y + \delta) - h(x + \delta, y) + h(x, y)  = 
\frac{\partial^2 h}{\partial x \partial y}(\xi_1, \xi_2) \cdot \delta^2
$$
for some $\xi_1 \in [x, x + \delta]$, $\xi_2 \in [y, y + \delta]$. 
Note that  
$(\xi_1, \xi_2) \in B_{3\delta}(x_0, y_0)$, since $(x, y) \in B_{\delta}(x_0, y_0)$. 
Hence for all sufficiently small $\delta$
$$
h(x + \delta, y + \delta) - h(x, y + \delta) - h(x + \delta, y + \delta) + h(x, y) \neq 0. 
$$

In case 2) we have 
$$
h(x + \delta, y + \delta) - h(x, y + \delta) - h(x + \delta, y) + h(x, y)  = 
\int_{[x, x + \delta] \times [y, y + \delta]} \frac{\partial^2 h}{\partial x \partial y}(u, v) \, du dv.  
$$
Furthermore, 
\begin{multline*}
\Bigl|\frac{1}{\delta^2} \int_{[x, x + \delta] \times [y, y + \delta]} \frac{\partial^2 h}{\partial x \partial y}(u, v) \, du dv - \frac{\partial^2 h}{\partial x \partial y}(x_0, y_0) \Bigr| \le \\
\le \frac{1}{\delta^2} \int_{B_{3\delta}(x_0, y_0)} \Bigl|\frac{\partial^2 h}{\partial x \partial y}(u, v) - \frac{\partial^2 h}{\partial x \partial y}(x_0, y_0) \Bigr| du dv \to 0, \quad \delta \to 0,
\end{multline*}
since $(x_0, y_0)$ is a Lebesgue point for the function $\frac{\partial^2 h}{\partial x \partial y}$. 
Hence
$$
h(x + \delta, y + \delta) - h(x, y + \delta) - h(x + \delta, y + \delta) + h(x, y) = \delta^2 \Bigl(\frac{\partial^2 h}{\partial x \partial y}(x_0, y_0) + o(1)\Bigr) \neq 0
$$
for all sufficiently small $\delta$.

Therefore, the points $(x, y), (x + \delta, y), (x, y + \delta), (x + \delta, y + \delta) \in A$ satisfy the condition required in the statement of Proposition \ref{prop_nondegenerate_r2}.

\end{proof}

Let us state a sufficient condition for the non-degeneracy of a cost function on locally convex spaces. 

Let $X$ be a locally convex space and $\mu \in \mathcal P(X)$. For any vector $h \in X$ consider the shift of the measure $\mu$ by $h$:
$$
\mu_h(A) = \mu(A + h) \quad \mbox{for any Borel set $A \subset X$},
$$
that is, $\mu_h$ is the image of the measure $\mu$ under the mapping $x \mapsto x - h$. 
The measure $\mu$ is called continuous along the vector $h \in X$ if 
$$ 
\lim_{t \to 0} \|\mu_{th} - \mu\| = 0, 
$$
where $\|\cdot\|$ denotes the total variation norm. 

\begin{proposition}\label{prop_nondegenerate_lcs}
Let $X, Y$ be Souslin locally convex spaces, $\eta_1 \in \mathcal P(X)$, $\eta_2 \in \mathcal P(Y)$ and
$\eta = \eta_1 \otimes \eta_2$. Let $X_C = X_C(\eta_1)$ be the space of vectors of continuity of the measure $\eta_1$, 
$Y_C = Y_C(\eta_2)$ be the space of vectors of continuity of the measure $\eta_2$. 
Let $h \colon X \times Y \to \mathbb R$ be a Borel measurable function and for $\eta$-a.e. points $(x_0, y_0)$ 
there exists an open neighbourhood $W$ of the point $(x_0, y_0)$ and  
vectors $e_1 \in X_C$, $e_2 \in Y_C$ such that for all $(x, y) \in W$ 
there exists the second order partial derivative $\frac{\partial^2 h}{\partial e_1 \partial e_2}(x, y) \neq 0$. 
Then the function $h$ satisfies the condition of non-degeneracy. 
\end{proposition}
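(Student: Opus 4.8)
The plan is to reduce to the rectangle criterion of Proposition~\ref{prop_nondegenerate_equiv2} and then, inside an arbitrary Borel set of positive measure, produce a genuine ``rectangle'' in the directions $e_1,e_2$ on which the mixed second difference of $h$ does not vanish, in the spirit of the proof of Proposition~\ref{prop_nondegenerate_r2} but with translation along a coordinate line replaced by the continuity of $\eta_1$ and $\eta_2$ along $e_1$ and $e_2$. By Proposition~\ref{prop_nondegenerate_equiv2} it suffices to show that for every Borel set $A\subset X\times Y$ with $\eta(A)>0$ there exist $x_1,x_2\in X$ and $y_1,y_2\in Y$ with $(x_1,y_1),(x_1,y_2),(x_2,y_1),(x_2,y_2)\in A$ and $h(x_1,y_1)+h(x_2,y_2)\neq h(x_1,y_2)+h(x_2,y_1)$.

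Fix such an $A$. Removing a Borel null set, we may assume that every point of $A$ carries an open neighbourhood $W$ and nonzero vectors $e_1\in X_C$, $e_2\in Y_C$ with $\partial^2 h/\partial e_1\partial e_2\neq 0$ everywhere on $W$. Since $X\times Y$ is Souslin, $\eta$ is Radon and therefore $\tau$-additive, so the union $G$ of all open sets $U$ with $\eta(A\cap U)=0$ satisfies $\eta(A\cap G)=0$; hence we may pick a base point $(x_0,y_0)\in A\setminus G$, which has the property that $\eta(A\cap U)>0$ for every neighbourhood $U$ of $(x_0,y_0)$. Fix the corresponding $W,e_1,e_2$; by local convexity choose convex open neighbourhoods $W_0\ni 0$ in $X$ and $W_0'\ni 0$ in $Y$ with $(x_0+W_0)\times(y_0+W_0')\subset W$, and set $A_1=A\cap\bigl((x_0+\tfrac12 W_0)\times(y_0+\tfrac12 W_0')\bigr)$, so that $\eta(A_1)>0$.

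For $\delta>0$ small enough that $\delta e_1\in\tfrac12 W_0$ and $\delta e_2\in\tfrac12 W_0'$, set
$$A_\delta=\bigl\{(x,y)\in A_1:\ (x+\delta e_1,y),\ (x,y+\delta e_2),\ (x+\delta e_1,y+\delta e_2)\in A\bigr\};$$
convexity of $W_0,W_0'$ guarantees that for $(x,y)\in A_\delta$ the whole rectangle $\{(x+ae_1,y+be_2):0\le a,b\le\delta\}$ is contained in $(x_0+W_0)\times(y_0+W_0')\subset W$. The crucial estimate is $\eta(A_1\setminus A_\delta)\to 0$ as $\delta\to 0$. By Fubini's theorem, $\eta(\{(x,y)\in A:(x+\delta e_1,y)\notin A\})\le\int_Y\eta_1\bigl(A^y\,\triangle\,(A^y-\delta e_1)\bigr)\,\eta_2(dy)$, where $A^y=\{x:(x,y)\in A\}$, and for each $y$ one has $\eta_1\bigl(A^y\triangle(A^y-\delta e_1)\bigr)\to 0$ as $\delta\to 0$: disintegrating $\eta_1$ along the lines $x'+\mathbb R e_1$, continuity of $\eta_1$ along $e_1$ forces the conditional one-dimensional measures to be absolutely continuous with respect to Lebesgue measure, and then $L^1$-continuity of translations together with dominated convergence yields $\eta_1(B\triangle(B-\delta e_1))\to 0$ for every Borel set $B$. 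Since the integrand is bounded by $1$, dominated convergence gives $\eta(\{(x,y)\in A:(x+\delta e_1,y)\notin A\})\to 0$; the terms with $(x,y+\delta e_2)$ and with $(x+\delta e_1,y+\delta e_2)$ are handled identically, using continuity of $\eta_2$ along $e_2$ and of $\eta=\eta_1\otimes\eta_2$ along $(e_1,e_2)$ (an immediate consequence of continuity of $\eta_1,\eta_2$ along $e_1,e_2$). Consequently $\eta(A_\delta)\ge\eta(A_1)-\eta(A_1\setminus A_\delta)>0$ for all sufficiently small $\delta$.

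Fix $\delta$ small enough both for the containment in $W$ and for $\eta(A_\delta)>0$, choose $(x,y)\in A_\delta$, and put $x_1=x$, $x_2=x+\delta e_1$, $y_1=y$, $y_2=y+\delta e_2$, so that the four points $(x_1,y_1),(x_1,y_2),(x_2,y_1),(x_2,y_2)$ lie in $A$. Applying the one-dimensional mean value theorem first in the $e_2$-direction to $b\mapsto h(x+\delta e_1,y+be_2)-h(x,y+be_2)$ on $[0,\delta]$, and then in the $e_1$-direction to $a\mapsto\frac{\partial h}{\partial e_2}(x+ae_1,y+\xi_2 e_2)$ (both steps legitimate because the mixed partial derivative exists at every point of the rectangle, which lies in $W$), one obtains
$$h(x_1,y_1)+h(x_2,y_2)-h(x_1,y_2)-h(x_2,y_1)=\delta^2\,\frac{\partial^2 h}{\partial e_1\partial e_2}(x+\xi_1 e_1,\,y+\xi_2 e_2)$$
for some $\xi_1,\xi_2\in(0,\delta)$, and the right-hand side is nonzero since $\partial^2 h/\partial e_1\partial e_2$ does not vanish on $W$. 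This produces the desired quadruple, completing the proof via Proposition~\ref{prop_nondegenerate_equiv2}. The main obstacle lies in the two preceding steps: one has to isolate a base point of $A$ that carries positive $\eta$-mass in every neighbourhood (settled by $\tau$-additivity of the Radon measure $\eta$), and, more substantially, to convert the abstract continuity of $\eta_1$ along $e_1$ into the usable density-type estimate $\eta_1(B\triangle(B-\delta e_1))\to 0$; once these are in hand, the construction parallels the proof of Proposition~\ref{prop_nondegenerate_r2}.
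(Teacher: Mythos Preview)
Your proof is correct and reaches the same conclusion as the paper, but the route is organized differently. The paper first passes to a compact $K\subset A$ with $\eta(K)>0$ and uses the compactness of $K$ to select a \emph{single} open $W$ and directions $e_1,e_2$ on which $\partial^2 h/\partial e_1\partial e_2\neq 0$ and $\eta(K\cap W)>0$; it then decomposes $X=U\oplus\mathbb{R}e_1$, $Y=V\oplus\mathbb{R}e_2$, disintegrates $\eta$ over $U\times V$, invokes the fact (from \cite{B10}) that continuity along $e_1,e_2$ forces the two-dimensional conditional measures $\eta^{u,v}$ to be absolutely continuous with respect to $\lambda_2$, and finishes by applying Proposition~\ref{prop_nondegenerate_r2} to the section $\hat h(s,t)=h(u+se_1,v+te_2)$ on a section $\tilde A_{u,v}$ of positive Lebesgue measure.

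You instead select the base point via $\tau$-additivity of the Radon measure $\eta$ (a support-point argument in place of the paper's finite-cover/pigeonhole step), and then, rather than reducing to $\mathbb{R}^2$, you run the density-type argument directly in $X\times Y$: continuity of $\eta_1$ along $e_1$ is converted into $\eta_1\bigl(B\triangle(B-\delta e_1)\bigr)\to 0$ via one-dimensional disintegration and $L^1$-continuity of translations, which yields $\eta(A_\delta)>0$ for small~$\delta$, after which the two-fold mean value theorem gives the nonvanishing second difference. In effect you are re-deriving the relevant piece of Proposition~\ref{prop_nondegenerate_r2} inside $X\times Y$ rather than quoting it. The trade-off is that the paper's approach is more modular (all the analytic work is already packaged in Proposition~\ref{prop_nondegenerate_r2}), while yours is more self-contained and avoids invoking that proposition; both rely on the same underlying fact from \cite{B10} about absolute continuity of conditional measures along directions of continuity.
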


\begin{proof}
Let us prove that for any set $A \subset X \times Y$ with $\eta(A) > 0$ there exist points $x_1, x_2 \in X$, $y_1, y_2 \in Y$ 
such that
$(x_1, y_1), (x_1, y_2), (x_2, y_1), (x_2, y_2) \in A$ and 
$$h(x_1, y_1) + h(x_2, y_2) < h(x_1, y_2) + h(x_2, y_1).$$
Let $A \subset X \times Y$ and $\eta(A) > 0$. Take a compact set $K \subset A$ with $\eta(K) > 0$. 
From the condition imposed on the function $h$ and compactness of $K$ it follows that 
there exists an open set $W \subset X \times Y$ with $\eta(K \cap W) > 0$ 
and vectors $e_1 \in X_C$, $e_2 \in Y_C$ such that for all $(x, y) \in W$ 
there exists the second partial derivative $\frac{\partial^2 h}{\partial e_1 \partial e_2}h(x, y) \neq 0$. 
Set $\tilde A = K \cap W$. 
 
Let $X = U \oplus \mathbb R e_1$, $Y = V \oplus \mathbb R e_2$. There exist conditional measures 
$\eta^{u, v}$ on $\mathbb R^2$ for the measure $\eta$ with respect to its projection $\pi$ on $U \times V$ 
such that for any Borel set $B \subset X \times Y$ we have
$$
\eta(B) = \int_{U \times V} \eta^{u, v}(B_{u, v}) \pi(du dv),
$$
where 
$$B_{u, v} = \{(s, t) \in \mathbb R^2: (u + s e_1, v + t e_2) \in B\}$$ is a two-dimensional section of the set $B$.
Since the measure $\eta$ is continuous along vectors $e_1$ and $e_2$, the conditional measures $\eta^{u, v}$ are absolutely continuous with respect to the Lebesgue measure on $\mathbb R^2$ for $\pi$-a.e. $(u, v)$ (see \cite{B10}). 
Since $\eta(\tilde A) > 0$, there exists a point $(u, v) \in U \times V$ such that
$\eta^{u, v}(\tilde A_{u, v}) > 0$ and the measure $\eta^{u, v}$ is absolutely continuous with respect to the Lebesgue measure on $\mathbb R^2$. 
Set 
$$\hat h(s, t) = h(u + s e_1, v + t e_2), \quad s, t \in \mathbb R. $$
By Proposition \ref{prop_nondegenerate_r2} there exist points $s_1, s_2, t_1, t_2 \in \mathbb R$ such that \linebreak
$(s_1, t_1), (s_1, t_2), (s_2, t_1), (s_2, t_2) \in \tilde A_{u, v}$ and 
$
\hat h(s_1, t_1) + \hat h(s_2, t_2) < \hat h(s_1, t_2) + \hat h(s_2, t_1).
$
Then the points $u + s_1 e_1$, $u + s_2 e_1$, $v + t_1 e_2$, $v + t_2 e_2$ satisfy the required condition. 
Thus the function $h$ satisfies the condition of non-degeneracy. 
\end{proof}

In case of the Euclidean space $\mathbb R^n$ we get the following sufficient condition for the non-degeneracy of a cost function. 

\begin{corollary}
Let $X = \mathbb R^n$, $Y = \mathbb R^m$. Let
measures $\eta_1 \in \mathcal P(\mathbb R^n)$ and $\eta_2 \in \mathcal P(\mathbb R^m)$ be absolutely continuous with respect to the Lebesgue measure and $\eta = \eta_1 \otimes \eta_2$. 
Let $h \colon \mathbb R^n \times \mathbb R^m \to \mathbb R$ be a Borel measurable function such that for 
$\eta$-a.e. points $(x_0, y_0)$ there exist indices $i \in \{1, \dots, n\}$ and $j \in \{1, \dots, m\}$ such that
in some neighbourhood of $(x_0, y_0)$ there exist the mixed second partial derivative 
$\frac{\partial^2 h}{\partial x_i \partial y_j}(x, y) \neq 0$. 
Then the function $h$ satisfies the condition of non-degeneracy.  
\end{corollary}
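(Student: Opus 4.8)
The plan is to obtain this Corollary as an immediate consequence of Proposition~\ref{prop_nondegenerate_lcs}. First I would observe that $\mathbb R^n$ and $\mathbb R^m$, with their usual topology, are Souslin locally convex spaces, so Proposition~\ref{prop_nondegenerate_lcs} is available once its hypotheses are verified for $X = \mathbb R^n$, $Y = \mathbb R^m$, the measures $\eta_1, \eta_2$, and the function $h$.

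The one ingredient that must be supplied is the identification of the spaces of vectors of continuity. I would invoke the standard fact that a probability measure on $\mathbb R^k$ which is absolutely continuous with respect to Lebesgue measure, say with density $p \in L^1(\mathbb R^k)$, is continuous along every vector $e$: indeed $\|\mu_{te} - \mu\| = \|p(\cdot + te) - p\|_{L^1} \to 0$ as $t \to 0$ by continuity of translations in $L^1$. Hence $X_C(\eta_1) = \mathbb R^n$ and $Y_C(\eta_2) = \mathbb R^m$; in particular every coordinate vector $e_i$ of $\mathbb R^n$ belongs to $X_C(\eta_1)$, every coordinate vector $e_j'$ of $\mathbb R^m$ belongs to $Y_C(\eta_2)$, and the directional derivative $\partial^2 h / \partial e_i \partial e_j'$ coincides with the mixed partial derivative $\partial^2 h / \partial x_i \partial y_j$.

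With this in place the verification is routine. For $\eta$-a.e. point $(x_0, y_0)$ the hypothesis of the Corollary provides indices $i, j$ and a neighbourhood $W$ of $(x_0, y_0)$ on which $\partial^2 h / \partial x_i \partial y_j$ exists and does not vanish; putting $e_1 = e_i \in X_C(\eta_1)$ and $e_2 = e_j' \in Y_C(\eta_2)$ one obtains exactly the situation required by Proposition~\ref{prop_nondegenerate_lcs}, namely an open neighbourhood $W$ and vectors of continuity $e_1, e_2$ such that $\partial^2 h / \partial e_1 \partial e_2 (x, y)$ exists and is nonzero for all $(x, y) \in W$. Since that proposition allows $W$ and the pair $(e_1, e_2)$ to depend on the point $(x_0, y_0)$, nothing further is needed, and it follows that $h$ satisfies the condition of non-degeneracy. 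I do not expect any genuine obstacle here; the only point that deserves a line of justification is the equality $X_C(\eta_1) = \mathbb R^n$ (and likewise for $\eta_2$), i.e. that absolute continuity forces continuity along all directions.
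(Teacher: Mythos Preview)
Your proposal is correct and is exactly the argument the paper has in mind: the Corollary is stated immediately after Proposition~\ref{prop_nondegenerate_lcs} with no separate proof, so it is meant to follow directly from that proposition once one observes that absolute continuity of $\eta_1$ and $\eta_2$ makes every vector a vector of continuity. Your justification of $X_C(\eta_1)=\mathbb R^n$ via continuity of translations in $L^1$ is the standard one and is all that is needed.
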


Let us give an example of a cost function which satisfies the non-degeneracy condition on an infinite-dimensional locally convex space. 

\begin{example}
{\rm
Let $X = X_0 \oplus X_1$, $Y = Y_0 \oplus Y_1$, where $X_0, Y_0$ are finite-dimensional linear subspaces. 
Let measures $\eta_1 \in \mathcal P(X)$ and $\eta_2 \in \mathcal P(Y)$ be such that
$X_0 \subset X_C(\eta_1)$, $Y_0 \subset Y_C(\eta_2)$.
Set
$$h(x_0 \oplus x_1, y_0 \oplus y_1) = h_0(x_0, y_0) + h_1(x_1, y_0 \oplus y_1) + h_2(x_0 \oplus x_1, y_1)$$ 
for all $x_0 \in X_0$, $x_1 \in X_1$, $y_0 \in Y_0$, $y_1 \in Y_1$, where the function $h_0$ is defined on the finite-dimensional space 
$X_0 \times Y_0$ and satisfies the condition that for a.e. (with respect to the Lebesgue measure) points $(x_0, y_0) \in X_0 \times Y_0$ 
there exist vectors $e_1 \in X_0$, $e_2 \in Y_0$ such that
in some neighbourhood of $(x_0, y_0)$ there exists the second partial derivative 
$\frac{\partial^2 h_0}{\partial e_1 \partial e_2}(x, y) \neq 0$. 
Then the function $h$ satisfies the condition of non-degeneracy. 
}
\end{example}

\begin{example}
{\rm
Let $X, Y$ be Souslin spaces, and let $\eta_1 \in \mathcal P(X)$, $\eta_2 \in \mathcal P(Y)$ be non-atomic measures, $\eta = \eta_1 \otimes \eta_2$. 
Let $h(x, y) = f(x) g(y)$, where $f \colon X \to \mathbb R$ and $g \colon Y \to \mathbb R$ are Borel measurable functions such that for $\eta_1$-a.e. $x \in X$ the function $f$ is injective in some neighbourhood of the point $x$ 
and for $\eta_2$-a.e. $y \in Y$ the function $g$ is injective in some neighbourhood of the point $y$. 
We show that the function $h$ satisfies the non-degeneracy condition. 

Indeed, let $A \subset X \times Y$ and $\eta(A) > 0$. Take a compact set $K \subset A$ with $\eta(K) > 0$. 
Then there exist open sets $U \subset X$ and $V \subset Y$ such that $f$ is injective on $U$, $g$ is injective on $V$ 
and $\eta(K \cap (U \times V)) > 0$. Furthermore, there exist $x_1, x_2 \in X$, $y_1, y_2 \in Y$ such that $x_1 \neq x_2$, $y_1 \neq y_2$ and 
$(x_1, y_1), (x_1, y_2), (x_2, y_1), (x_2, y_2) \in K \cap (U \times V)$. We have
$$
h(x_1, y_1) - h(x_1, y_2) - h(x_2, y_1) + h(x_2, y_2) = (f(x_1) - f(x_2))(g(y_1) - g(y_2)) \neq 0,$$
since $x_1, x_2 \in U$, $y_1, y_2 \in V$ and the functions $f$, $g$ are injective on $U$ and $V$ respectively.

Moreover, in this example we may assume that there exist Borel sets $X_i \subset X$, $i \in \mathbb N$, such that
$f$ is injective on $X_i$ for every $i \in \mathbb N$ and $\eta_1(X \setminus \bigcup_{i = 1}^{\infty} X_i) = 0$, 
and there exist Borel sets $Y_i \subset Y$, $i \in \mathbb N$, such that
$g$ is injective on $Y_i$ for every $i \in \mathbb N$ and $\eta_2(Y \setminus \bigcup_{i = 1}^{\infty} Y_i) = 0$. 
Then for any $A \subset X \times Y$ with $\eta(A) > 0$ there exist $i, j \in \mathbb N$ such that 
$\eta(A \cap (X_i \times Y_j)) > 0$. Therefore, we can find points 
$(x_1, y_1), (x_1, y_2), (x_2, y_1), (x_2, y_2) \in A \cap (X_i \times Y_j)$ with
$$
h(x_1, y_1) + h(x_2, y_2) \neq h(x_1, y_2) + h(x_2, y_1). 
$$
}
\end{example}

\end{document}